\newtheorem{thm}{Theorem}[section]
\newtheorem{theorem}[thm]{Theorem}
\newtheorem{cor}[thm]{Corollary}
\newtheorem{lemma}[thm]{Lemma}
\newtheorem{proposition}[thm]{Proposition}
\theoremstyle{remark}
\newtheorem{remark}[thm]{Remark}
\newcommand{\RR}{\mathbb R}
\newcommand{\ip}[2]{\left\langle #1,#2\right\rangle}
\newcommand{\nm}[1]{\|{#1}\|}
\newcommand{\fr}{\mathcal{F}}
\begin{document}

\title{Optimal properties of the canonical tight probabilistic frame}
\author[D. Cheng and K. A. Okoudjou]{Desai Cheng and Kasso A.~Okoudjou}
\address{Department of Mathematics, University
of Missouri, Columbia, MO 65211-4100}
\email{chengdesai@yahoo.com}
\address{Department of Mathematics and Norbert Wiener Center, University
of Maryland, College Park, MD 20742}
\email{kasso@math.umd.edu}
%\thanks{This work  was partially supported by a grant from the Simons Foundation $\# 319197$, and ARO grant W911NF1610008.}

\date{\today}

\subjclass{42C15, 94A12, 60D05}

\begin{abstract} A probabilistic frame is a Borel probability measure with finite second moment whose support spans  $\RR^d$. A Parseval probabilistic frame is one for which the associated matrix of second moment is the identity matrix in $\RR^d$. Each probabilistic frame is canonically associated to a Parseval probabilistic frame. In this paper, we show that this canonical Parseval probabilistic frame is the closest Parseval probabilistic frame to a given probabilistic frame  in the $2-$Wasserstein distance. Our proof is based on two main ingredients. On the one hand, we show that a probabilistic frame can be  approximated in the $2-$Wasserstein metric  with (compactly supported) finite frames whose bounds can be controlled. On the other hand we establish some fine continuity properties of the function that maps a probabilistic frame to its canonical Parseval probabilistic frame.
Our results generalize  similar ones for finite frames and their associated Parseval frames.
\end{abstract}

\maketitle

%\tableofcontents

\section{Introduction}\label{sec1}

The notion of probabilistic frames was first  introduced in \cite{EhlerRTF} in the setting of probability measures on the unit sphere, and was later generalized to probability measures  on $\RR^d$ in \cite{EhlOko2012}. In essence, this theory is a generalization of the theory of finite frames which has seen a wealth of activities in recent year, \cite{CasKut2013, chri2003, KovChe1, KovChe2, OkoudjouFiniteFrame}.

\subsection{Review of finite frame theory}\label{subsec1.1}
Before we give the definition and some elementary properties of probabilistic frames, we recall that a set $\Phi=\{\varphi_i\}_{i=1}^N \subset \RR^d$ is a frame for $\RR^d$ if and only if there exist $0<A\leq B<\infty$ such that $$A\|x\|^2\leq \sum_{i=1}^N\ip{x}{\varphi_i}^2\leq B\|x\|^2\qquad \forall\, x\in \RR^2.$$
The  frame $\Phi$ is a \emph{tight frame} if we can choose $A = B$.  Furthermore, if $A=B=1$, $\Phi$ is called a Parseval frame. In the sequel the set of frames for $\RR^d$ with $N$ vectors will be denoted by $\fr(N,d)$, and simply  $\fr$ when the context is clear. The subset of frames with frame bounds $0<A\leq B<\infty$ will be denoted $\fr_{A, B}(N,d),$ or simply $\fr_{A,B}$.  We equip the set  $\fr(N, d)$  with the metric
\begin{equation}\label{frame-metric}
d(\Phi, \Psi)= \sqrt{\sum_{i=1}^{N}\|\varphi_i - \psi_i\|^2}=\sqrt{\sum_{i=1}^d\|R_i-P_i\|^2}
\end{equation} where $\Phi=\{\varphi_i\}_{i=1}^{N}, \Psi=\{\psi_i\}_{i=1}^{N }) \in \fr(M,d),$   $\{R_i\}_{i=1}^d, \{P_i\}_{i=1}^d \subset \RR^N$ denote the rows of $\Phi$, and those of $\Psi$, respectively.

Let $\Phi=\{\varphi_i\}_{i=1}^N$ be a frame for $\RR^d$. Throughout the paper we shall abuse notation and denote the  \emph{synthesis matrix} of the frame by   $\Phi$,  the $d\times N$ whose $i^{th}$ column is $\varphi_i.$ The matrix $$S:=S_{\Phi}=\Phi \Phi^{T}=\sum_{i=1}^N \ip{\cdot}{\varphi_i}\varphi_i$$ is the \emph{frame matrix}. It is known that $\Phi=\{\varphi_i\}_{i=1}^N$ is a frame for $\RR^d$ if and only if $S$ is a positive definite matrix. Moreover, the smallest eigenvalue of $S$ is the optimal lower frame bound, while its largest eigenvalue is the optimal upper frame bound. $\Phi$ is a tight frame if and only if $S$ is a multiple of the $d\times d$ identity matrix. In particular, $\Phi$ is a Parseval frame if and only if $S=I$.

If $\Phi$ is a frame, then $S$ is positive definite and thus invertible. Consequently,  $$\Phi^{\dag}=\{\varphi_i^\dag\}_{i=1}^N= \{S^{-1/2}\varphi_i\}_{i=1}^N$$ is a Parseval frame, leading to following reconstruction formula:

$$x=\sum_{i=1}^N\ip{x}{\varphi_i^{\dag}}\varphi_i=\sum_{i=1}^N\ip{x}{\varphi_i} \varphi_i^{\dag}\, \forall\, x\in \RR^d.$$

In addition,  $\Phi^\dag$ is the unique Parseval frame which solves the following problem \cite[Theorem 3.1]{CasKut07}:

\begin{equation}\label{min-clos-pf}
\min\{d(\Phi, \Psi)^2=\sum_{i=1}^N\|\varphi_i-\psi_i\|^2: \Psi=\{\psi_i\}_{i=1}^N\subset \RR^d, \, \, {\textrm Parseval \, frame}\}.
\end{equation}
To be specific,

\begin{theorem}\label{uniqueness_prob_1}\cite[Theorem 3.1]{CasKut07}

If $\Phi=\{\varphi_i\}_{i=1}^N$ is a frame for $\RR^d$,  then $\Phi^{\dag}=\{\varphi_i^\dag\}_{i=1}^N= \{S^{-1/2}\varphi_i\}_{i=1}^N$ is the unique solution to~\eqref{min-clos-pf}.
\end{theorem}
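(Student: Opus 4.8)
The plan is to translate the minimization \eqref{min-clos-pf} into a trace maximization subject to the orthogonality constraint, and then settle both existence and uniqueness through a single equality-rigid trace inequality. Identifying each frame with its $d\times N$ synthesis matrix, observe first that $d(\Phi,\Psi)^2=\sum_{i=1}^N\nm{\varphi_i-\psi_i}^2=\tr\big((\Phi-\Psi)(\Phi-\Psi)^T\big)$ is the squared Frobenius norm of $\Phi-\Psi$. Expanding gives
\[
d(\Phi,\Psi)^2=\tr(\Phi\Phi^T)-2\tr(\Phi\Psi^T)+\tr(\Psi\Psi^T).
\]
Since $\Psi$ ranges over Parseval frames we have $\Psi\Psi^T=I$, so $\tr(\Psi\Psi^T)=d$, while $\tr(\Phi\Phi^T)=\tr(S)$ is fixed. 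Hence minimizing $d(\Phi,\Psi)^2$ over Parseval $\Psi$ is equivalent to maximizing $\tr(\Phi\Psi^T)$.

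Next I would exploit the polar-type factorization $\Phi=S^{1/2}\Phi^\dag$, which is legitimate because $\Phi^\dag=S^{-1/2}\Phi$ satisfies $\Phi^\dag(\Phi^\dag)^T=S^{-1/2}SS^{-1/2}=I$, so $\Phi^\dag$ is itself a Parseval frame. Writing $C:=\Phi^\dag\Psi^T$, a $d\times d$ matrix, we obtain $\tr(\Phi\Psi^T)=\tr(S^{1/2}C)$. Because $\Phi^\dag$ and $\Psi$ both have orthonormal rows, each has largest singular value $1$, so their product $C$ has all singular values at most $1$; in particular $\nm{Cx}\le\nm{x}$ for every $x$. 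The candidate $\Phi^\dag$ realizes $C=I$ and gives $\tr(\Phi(\Phi^\dag)^T)=\tr(S^{1/2})$, so the theorem reduces to showing that $\tr(S^{1/2}C)\le\tr(S^{1/2})$ for every such $C$, with equality only at $C=I$.

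The key lemma, which I expect to be the main obstacle, is the following equality-rigid trace bound: if $P$ is a positive definite symmetric matrix and $C$ has all singular values at most $1$, then $\tr(PC)\le\tr(P)$, with equality if and only if $C=I$. To prove it I would diagonalize $P=\sum_i\lambda_i e_ie_i^T$ with all $\lambda_i>0$ and write $\tr(PC)=\sum_i\lambda_i\ip{e_i}{Ce_i}$. Each term obeys $\ip{e_i}{Ce_i}\le\nm{e_i}\,\nm{Ce_i}\le 1$ by Cauchy--Schwarz and the singular value bound, whence $\tr(PC)\le\sum_i\lambda_i=\tr(P)$. Since every $\lambda_i>0$, equality forces $\ip{e_i}{Ce_i}=1$ for each $i$, and the Cauchy--Schwarz equality case then forces $Ce_i=e_i$ on the whole basis, i.e. $C=I$. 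The delicate point is that positive definiteness, not merely positive semidefiniteness, is exactly what makes the equality case pin down $C$ uniquely.

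Finally I would assemble the pieces. Applying the lemma with $P=S^{1/2}$ shows that $\Phi^\dag$ attains the maximum of $\tr(\Phi\Psi^T)$, hence minimizes $d(\Phi,\Psi)^2$. For uniqueness, any minimizer $\Psi$ must force equality in the lemma, so $C=\Phi^\dag\Psi^T=I$; then a direct computation
\[
\nm{\Psi-\Phi^\dag}_F^2=\tr(\Psi\Psi^T)-2\tr(\Phi^\dag\Psi^T)+\tr(\Phi^\dag(\Phi^\dag)^T)=d-2d+d=0
\]
yields $\Psi=\Phi^\dag$, completing the proof.
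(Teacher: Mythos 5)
Your proof is correct, but it follows a genuinely different route from the paper's. The paper works row-by-row: it passes to the eigenbasis of $S$, observes that in that basis the rows of $\Phi$ are orthogonal while Parseval frames are exactly those with orthonormal rows, and then invokes the fact that the closest orthonormal set to a given orthogonal set $\{u_i\}$ is $\{u_i/\|u_i\|\}$ — which is precisely $S^{-1/2}\Phi$. This has the side benefit of producing the explicit minimum value $\sum_{k=1}^d(1-\lambda_k^{-1/2})^2$ in terms of the eigenvalues of $S$, but the crucial uniqueness assertion (that row-wise normalization is the \emph{unique} solution of the jointly constrained problem) is stated rather than argued in detail. Your approach is instead a Procrustes-style trace argument: you reduce the minimization to maximizing $\tr(\Phi\Psi^T)$, factor $\Phi=S^{1/2}\Phi^\dag$, and push everything into the contraction $C=\Phi^\dag\Psi^T$, so that the whole theorem — existence \emph{and} uniqueness — rests on the single equality-rigid inequality $\tr(PC)\le\tr(P)$ for $P$ positive definite and $\|C\|\le 1$, whose rigidity case ($C=I$) you settle correctly via Cauchy--Schwarz on the eigenbasis of $P$; the final Frobenius computation $\|\Psi-\Phi^\dag\|_F^2=d-2d+d=0$ then closes uniqueness cleanly. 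The two proofs ultimately exploit the same spectral data of $S$, but yours packages the uniqueness into a rigidity lemma with a complete proof, at the cost of not exhibiting the closed-form minimal distance that the paper records.
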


In Section~\ref{sec2}, and for the sake of completeness, we give a new and simple proof of this result and we refer to  \cite{Balan99, BodCas10, CahillCasazza13} for related results.

\subsection{Probabilistic frames}\label{subsec1.2}
The main goal of this paper is to characterize the minimizers of an optimal problem analog of ~\eqref{min-clos-pf} for probabilistic frames.  To motivate the definition of a probabilistic frame, we note that given a  frame $\Phi = \{\varphi_i\}_{i=1}^N \subset \RR^{d}$, then the discrete probability  measure $$\mu_{\Phi}=\tfrac{1}{N}\sum_{k=1}^N\delta_{\varphi_k}$$ has the property that its support ($\{\varphi_k\}_{k=1}^N$) spans $\RR^d$ and that it has finite second moment, i.e., $$\int_{\RR^d}\|x\|^2d\mu_{\Phi}(x)=\tfrac{1}{N}\sum_{k=1}^N\|\varphi_k\|^2< \infty.$$  The  probability measure $\mu_\Phi$ is an example of a probabilistic frame that was introduced in \cite{EhlerRTF, EhlOko2012}.

More specifically, a Borel probability measure $\mu$   is a \emph{probabilistic frame} if there exist $0<A\leq B < \infty$ such that for all $x\in\RR^d$ we have
\begin{equation}\label{pfineq}
 A\|x\|^2 \leq \int_{\RR^{d}} |\langle x,y\rangle |^2 d\mu (y) \leq B\|x\|^2.
 \end{equation}
The constants $A$ and $B$ are called \emph{lower and upper probabilistic frame bounds}, respectively.
When $A=B,$ $\mu$ is called a \emph{tight probabilistic frame}. In particular, when $A=B=1$, $\mu$ is called a \emph{Parseval probabilistic frame}.

A special class of probabilistic frames that will be considered in the sequel consists of discrete measures $\mu_{\Phi, w}=  \sum_{i=1}^{N}w_i\delta_{\varphi_i}$ where  $\Phi=\{\varphi_i\}_{i=1}^N\subset \RR^d$, and $w=\{w_i\}_{i=1}^N\subset [0, \infty)$ is a set of  weights such that $\sum_{i=1}^Nw_i=1$. A probability measure such as $\mu_{\Phi, w}$ will be termed  \emph{finite probabilistic frame}, if and only if  it is a probabilistic frame for $\RR^d$. When the context is clear we will simply write $\mu$ for $\mu_{\Phi, w}$.  We shall also identify a finite probabilistic frame $\mu_{\Phi, w}$ with the frame $\Phi_w=\{\sqrt{w_i}\varphi_i\}_{i=1}^{N}$, as both have the same frame bounds. We refer to the surveys \cite{EhlOko2013, KOPrecondPF2016} for an overview of the theory of probabilistic frames.

We shall prove an analog of Theorem~\ref{uniqueness_prob_1} by endowing the set of probabilistic frames with  the Wasserstein metric.
Let $\mathcal{P}:=\mathcal{P}(\mathcal{B},\RR^d)$ denote the collection of probability measures on $\RR^d$ with respect to the Borel $\sigma$-algebra $\mathcal{B}$.
Let $$
\mathcal{P}_{2}:= \mathcal{P}_{2}(\RR^{d})=\bigg\{ \mu \in \mathcal{P}: M_{2}^{2}(\mu):=\int_{\RR^{d}}\nm{x}{}^{2}d\mu(x) < \infty\bigg\}$$
be the set of all probability measures with finite second moments.  For $\mu, \nu \in \mathcal{P}_2$, let
$\Gamma(\mu, \nu)$ be the set of all Borel probability measures  $\gamma$ on $\RR^d \times \RR^d$ whose marginals are $\mu$ and $\nu$, respectively, i.e., $\gamma(A\times \RR^d)=\mu(A)$ and $\gamma(\RR^d \times B) = \nu(B)$ for all Borel subset $A, B$ in $\RR^d$. The space
$\mathcal{P}_{2}$ is equipped with the $2$-\emph{Wasserstein metric}  given by
\begin{equation}\label{wmetric}
W_{2}^{2}(\mu, \nu):=\min\bigg\{\int_{\RR^d \times \RR^d}\nm{x-y}{}^{2}d\gamma(x, y), \gamma \in \Gamma(\mu, \nu)\bigg\}.
\end{equation}
 The minimum defined by~\eqref{wmetric} is achieved at a  measure $\gamma_0 \in \Gamma(\mu, \nu)$, that is:
$$W_{2}^{2}(\mu, \nu)=\int_{\RR^d \times \RR^d}\nm{x-y}{}^{2}d\gamma_0(x, y). $$
  We refer to \cite[Chapter 7]{AGS2005}, and  \cite[Chapter 6]{Villani2009} for more details on the Wasserstein spaces.

\subsection{Our contributions}\label{subsec1.3} The investigation of probabilistic frames is still at its initial stage. For example, in \cite{WCKO16} the authors introduced the notion of transport duals and used the setting of the Wasserstein metric to investigate the properties of such probabilistic frames. In particular, this setting offers the flexibility to find (non-discrete) probabilistic frames which are duals to a given probabilistic frame.  Transport duals are the probabilistic analogues of alternate duals in frame theory \cite{chri2003, Larsen}. The main contribution of this paper (Theorem~\ref{maintheorem}) is to investigate the properties of the canonical Parseval probabilistic frame associated to a given  probabilistic frame, see Section~\ref{sec2} for definitions. To prove this result we approximate a given probabilistic frame with one that is compactly supported and whose frame bounds are controlled in a precise way (Theorem~\ref{density-dpf}).  In the process of proving our main result, we prove a number of results that are of interest on their own right. For example, in Section~\ref{sec2} we  establish a number of new results about the canonical Parseval frame $\Phi^{\dag}$  associated to a frame $\Phi$.

\section{Optimal Parseval probabilistic frames}\label{sec2}
Before proving our main result in Section~\ref{subsec2.3}, we revisit  the canonical Parseval frame $\Phi^{\dag}$ associated to a given frame $\Phi=\{\varphi_k\}_{k=1}^N\subset \RR^d$. In particular, Section~\ref{subsec2.1} considers the continuity  properties of the map $F(\Phi)=\Phi^\dag$. In Section~\ref{subsec2.2} we show how a probabilistic frame can be approximated in the $2$-Wasserstein metric  by a sequence of finite frames whose bounds are controlled by those of the initial probabilistic frame. While such approximation for  probability measures in the $2$-Wasserstein metric is well known \cite[Theorem 6.18]{Villani2009}, our key contribution here is the control of the frame bounds of the approximating sequence.

\subsection{Continuity properties of the canonical Parseval frame}\label{subsec2.1}

In this section we revisited the canonical Parseval frame $\Phi^{\dag}$ associated to a given frame $\Phi=\{\varphi_k\}_{k=1}^N\subset \RR^d$.
First, we give a new and elementary proof of Theorem~\ref{uniqueness_prob_1}.

\begin{proof}{Proof of Theorem~\ref{uniqueness_prob_1}}
We first note that a frame $\Psi\subset \RR^d$ is Parseval if the rows of its synthesis matrix are orthonormal. Furthermore, $\Psi \subset \RR^d$ is a Parseval frame if and only if $U\Psi$ is a Parseval frame for any $d\times d$ orthogonal matrix $U$.

Now let   $\Phi=\{\varphi_i\}_{i=1}^N$ be a frame for $\RR^d$.  Write $S=\Phi\Phi^T= UDU^T$ for some orthogonal matrix $U$. Observe that $U^T\Phi$ is the matrix of $\Phi$ written with respect to the orthonormal basis given by the rows of $U^T$. In addition, the rows of $U^T\Phi$ are pairwise  orthogonal. Let $\Psi=\{\psi_i\}_{i=1}^N\subset \RR^d$ be any Parseval frame, then

$$d^2(\Phi, \Psi)=d^2(U^T\Phi, U^T\Psi)=\sum_{i=1}^d\|R_i-P_i\|^2,$$ where $\{R_i\}_{i=1}^d\subset \RR^N$ and $\{P_i\}_{i=1}^d\subset \RR^N$  denote respectively  the rows of $U^T\Phi$  and $U^T\Psi$.
Consequently, finding $$\min\{d(\Phi, \Psi)^2=\sum_{i=1}^N\|\varphi_i-\psi_i\|^2: \Psi=\{\psi_i\}_{i=1}^N\subset \RR^d, \, \, {\textrm Parseval \, frame}\}$$ is equivalent to finding
$$\min \{\sum_{i=1}^d\|R_i-P_i\|^2: \{P_i\}_{i=1}^N\subset \RR^N, \, \, {\textrm orthonormal \, set }\}$$  where $\{R_i\}_{i=1}^d$ form an orthogonal set of vectors in $\RR^N$.

But $\Phi^\dag=\{S^{-1/2}\varphi_i\}_{i=1}^N$  is a Parseval frame, so its rows form an orthonormal set in $\RR^N$. Consequently,  $\Phi^\dag$  is  a solution to~\eqref{min-clos-pf}.  The uniqueness follows by observing that the (unique) closest orthonormal set to a given orthogonal vectors $\{u_i\}_{i=1}^d\subset \RR^N$ is $\{\tfrac{u_i}{\|u_i\|}\}_{i=1}^d.$

Consequently, $$\min\{d(\Phi, \Psi)^2=\sum_{i=1}^N\|\varphi_i-\psi_i\|^2: \Psi=\{\psi_i\}_{i=1}^N\subset \RR^d, \, \, {\textrm Parseval \, frame}\}=\sum_{k=1}^d(1-\lambda_k^{-1/2})^2 $$ where $\{\lambda_k\}_{k=1}^d\subset (0, \infty)$ are the eigenvalues of $S=\Phi\Phi^T$.

\end{proof}

In the remaining part of  section we study the continuity properties of the functions that maps a given frame to its canonical Parseval frame. This map  $$F: \fr(N, d) \rightarrow \fr(N,d)$$ given by
\begin{equation}\label{func-F}
F(\Phi)=F(\{\varphi_i\}_{i=1}^{N}) = S_{\Phi}^{-1/2}(\{\varphi_i\}_{i=1}^{N})=\{S_{\Phi}^{-1/2}\varphi_i\}_{i=1}^N.
\end{equation} In fact, our results show that for $0< A\leq B$, $F$ is uniformly continuous on $\fr_{A, B}$, the set of frames with frame bounds between $A$ and $B$. More specifically,

\begin{theorem}\label{cont-F}
Let $0<A\leq B <\infty$, and  $\delta > 0$ be given. Then there  exists  $\epsilon > 0$ such that given any frame  $\Phi=\{\varphi_i\}_{i=1}^N$,  with frame bounds between $A$ and $B$, and $N:=N_{\Phi}\geq 2$, for any  frame $\Psi=\{\psi_i\}_{i=1}^N$ such that $d(\Phi, \Psi)< \epsilon$ we have $d(F(\Phi), F(\Psi))< \delta.$
\end{theorem}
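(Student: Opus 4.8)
The plan is to work entirely with the synthesis matrices and to exploit that $d(\Phi,\Psi)=\nm{\Phi-\Psi}_F$ is the Frobenius norm of the difference. Writing $F(\Phi)-F(\Psi)=S_\Phi^{-1/2}\Phi-S_\Psi^{-1/2}\Psi$, I would insert and subtract $S_\Phi^{-1/2}\Psi$ to split
\[
F(\Phi)-F(\Psi)=S_\Phi^{-1/2}(\Phi-\Psi)+\big(S_\Phi^{-1/2}-S_\Psi^{-1/2}\big)\Psi,
\]
and then estimate each piece in Frobenius norm using submultiplicativity $\nm{MX}_F\le\nm{M}_{\mathrm{op}}\nm{X}_F$. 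The first term is immediately controlled: since the eigenvalues of $S_\Phi$ lie in $[A,B]$ we have $\nm{S_\Phi^{-1/2}}_{\mathrm{op}}\le A^{-1/2}$, so this term is at most $A^{-1/2}\,d(\Phi,\Psi)<A^{-1/2}\epsilon$. All the work is therefore in the second term.

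Before estimating the second term I would record the elementary bounds, all independent of $N$. Because $\nm{\Psi}_F^2=\tr(S_\Psi)$ equals the sum of at most $d$ eigenvalues of $S_\Psi$, and $\nm{\Psi}_F\le\nm{\Phi}_F+\epsilon$ with $\nm{\Phi}_F^2=\tr(S_\Phi)\le dB$, the quantity $\nm{\Psi}_F$ is bounded by a constant depending only on $d,B$ (once, say, $\epsilon\le1$). This trace bound is exactly what prevents the estimate from deteriorating as $N\to\infty$. Next, from $S_\Phi-S_\Psi=(\Phi-\Psi)\Phi^{T}+\Psi(\Phi-\Psi)^{T}$, together with $\nm{\Phi-\Psi}_{\mathrm{op}}\le\nm{\Phi-\Psi}_F<\epsilon$, $\nm{\Phi}_{\mathrm{op}}\le\sqrt{B}$ and $\nm{\Psi}_{\mathrm{op}}\le\sqrt{B}+\epsilon$, I obtain $\nm{S_\Phi-S_\Psi}_{\mathrm{op}}\le C_1\epsilon$ for a constant $C_1=C_1(B)$. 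Finally, Weyl's inequality gives $\lambda_{\min}(S_\Psi)\ge\lambda_{\min}(S_\Phi)-\nm{S_\Phi-S_\Psi}_{\mathrm{op}}\ge A-C_1\epsilon$, so once $\epsilon$ is chosen with $C_1\epsilon\le A/2$, the matrix $S_\Psi$ is positive definite with $\lambda_{\min}(S_\Psi)\ge A/2$; in particular $\Psi$ is automatically a frame, and both $S_\Phi,S_\Psi$ have spectra bounded below by $A/2$.

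The technical heart is a Lipschitz estimate for the map $M\mapsto M^{-1/2}$ on positive definite matrices whose spectrum is bounded below by $c:=A/2>0$. I would use the integral representation $M^{-1/2}=\tfrac2\pi\int_0^\infty (M+s^2I)^{-1}\,ds$ together with the resolvent identity $(S_\Phi+s^2I)^{-1}-(S_\Psi+s^2I)^{-1}=(S_\Phi+s^2I)^{-1}(S_\Psi-S_\Phi)(S_\Psi+s^2I)^{-1}$. Since both resolvents have operator norm at most $(c+s^2)^{-1}$, integrating yields
\[
\nm{S_\Phi^{-1/2}-S_\Psi^{-1/2}}_{\mathrm{op}}\le\Big(\tfrac2\pi\int_0^\infty (c+s^2)^{-2}\,ds\Big)\nm{S_\Phi-S_\Psi}_{\mathrm{op}}=K\,\nm{S_\Phi-S_\Psi}_{\mathrm{op}},
\]
where $K=K(A)<\infty$ depends only on $A$. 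This integral route sidesteps the fact that $S_\Phi$ and $S_\Psi$ need not commute, so they cannot be simultaneously diagonalized.

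Combining everything, the second term is at most $K\,C_1\,\nm{\Psi}_F\,\epsilon$, and hence
\[
d(F(\Phi),F(\Psi))\le A^{-1/2}\epsilon+K\,C_1\,(\sqrt{dB}+\epsilon)\,\epsilon,
\]
with all constants depending only on $A,B,d$. Choosing $\epsilon$ small enough, and below the threshold forcing $\lambda_{\min}(S_\Psi)\ge A/2$, makes the right-hand side less than $\delta$, giving the claim uniformly over all $N\ge2$. I expect the \emph{main obstacle} to be precisely this uniformity in $N$: the naive fear is that the factor $\nm{\Psi}_F$ in the second term grows with the number of vectors, and the resolution is the observation that $\nm{\Psi}_F^2=\tr(S_\Psi)$ is controlled by $d$ times the upper frame bound rather than by $N$. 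With that in hand, the operator-Lipschitz estimate for $M\mapsto M^{-1/2}$ is the remaining, purely $d$-dimensional, technical ingredient.
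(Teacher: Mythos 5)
Your proof is correct, but it follows a genuinely different route from the paper's. The paper argues by contradiction and stays entirely in the geometry of rows: it writes both frames in the eigenbasis of $S_\Phi$, uses the fact that the canonical Parseval frame is obtained by normalizing the (orthogonal) rows, and then plays two lemmas against each other --- Lemma~\ref{rows-prop}(d), which shows $\sum_{i=1}^{d}\bigl(\|P_i-\tfrac{R_i}{\|R_i\|}\|^2-\|P_i-\tfrac{P_i}{\|P_i\|}\|^2\bigr)$ is $O(\epsilon)$, and Lemma~\ref{contradiction-unifcont}, which shows that $d(F(\Phi),F(\Psi))>\delta$ would force $\sum_{i=1}^{d}\bigl(\|P_i-R_i'\|^2-\|P_i-\tfrac{P_i}{\|P_i\|}\|^2\bigr)$ to exceed a fixed positive constant --- so that for small $\epsilon$ the frame $F(\Phi)$ would beat $F(\Psi)$ as the closest Parseval frame to $\Psi$, contradicting Theorem~\ref{uniqueness_prob_1}. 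You instead give a direct perturbation estimate: the splitting $F(\Phi)-F(\Psi)=S_\Phi^{-1/2}(\Phi-\Psi)+(S_\Phi^{-1/2}-S_\Psi^{-1/2})\Psi$, the trace identity $\|\Psi\|_F^2=\tr(S_\Psi)\le d(\sqrt{B}+\epsilon)^2$ to kill any dependence on $N$, Weyl's inequality to push the spectrum of $S_\Psi$ above $A/2$, and the integral representation $M^{-1/2}=\tfrac2\pi\int_0^\infty(M+s^2I)^{-1}\,ds$ with the resolvent identity to get the operator-Lipschitz bound $\|S_\Phi^{-1/2}-S_\Psi^{-1/2}\|_{\mathrm{op}}\le K\|S_\Phi-S_\Psi\|_{\mathrm{op}}$; all steps check out, including the handling of non-commuting $S_\Phi,S_\Psi$. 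What each approach buys: yours yields an explicit, essentially Lipschitz modulus of continuity with constants depending only on $A,B,d$, and does not presuppose the optimality characterization of $F$; the paper's is more elementary (no spectral integral), but is purely qualitative, and --- structurally important for the paper --- its key Lemma~\ref{contradiction-unifcont} is reused verbatim later in the uniqueness argument of Theorem~\ref{optimizer1}, so the authors get two results from one geometric lemma, whereas your argument would cover only the continuity statement.
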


Before proving this theorem, we establish a number of preliminary results and make the following remark that will be used in the sequel.

\begin{remark}\label{rowsframe}
 Let $\Phi=\{\varphi_i\}_{i=1}^{N}\in \fr(N, d)$ be a frame. Then,  $S=\Phi \Phi^{T}= ODO^{T}$ where $O$ is a $d\times d$ orthogonal matrix and $D$ is a positive definite diagonal matrix. Fix the orthonormal basis of $\RR^d$ whose columns form the matrix $O$ and write each frame vector $\varphi_i$ in this basis. The synthesis matrix of the frame $\Phi$ in the basis $O$ is $$[\Phi]_{O}=O^{T}\Phi.$$ Let $\{R_i\}_{i=1}^d$ be the rows of $[\Phi]_{O}$. We shall refer to $\{R_i\}_{i=1}^d$ as simply the rows of  $\Phi$.
\end{remark}

\begin{lemma}\label{rows-prop} Let $\Phi=\{\varphi_i\}_{i=1}^{N} \in \fr(N, d)$. Denote by $\{R_i\}_{i=1}^{d}$ the rows of $\Phi$  as described by Remark~\ref{rowsframe}.  Let $\epsilon>0$ and $ \Psi=\{\psi_i\}_{i=1}^{N} \in \fr(N, d)$ be such that $d(\Phi, \Psi) <  \epsilon $. Denote by $\{P_i\}_{i=1}^d$ the rows of $\Psi$ when written in the orthonormal basis $O$.
Then
\begin{enumerate}
\item[(a)]  $\big|\|\ R_i\|-\|P_i\|\big|< \epsilon.$ Furthermore, $\sqrt{A}-\epsilon <\|P_i\|<\sqrt{B}+\epsilon$ for each $i=1, 2, \hdots, d.$
\item[(b)] $$d(\Phi,F(\Phi)) \geq \sqrt{\sum_{i=1}^{d}\|R_i - \dfrac{R_i}
{\|R_i\|}\|^2}.$$
\item[(c)] For each $i\in \{1, 2\, \hdots, d\}$ we have
$$\bigg\|\dfrac{P_i}{\|P_i\|} - \dfrac{R_i}{\|R_i\|}\bigg\| <  \dfrac{2\epsilon}{\sqrt{A}}.$$
\item[(d)] For each $i\in \{1, 2\, \hdots, d\}$ we have $$0\leq \|P_i-\tfrac{R_i}{\|R_i\|}\|^2-\|P_i-\tfrac{P_i}{\|P_i\|}\|^2\leq \tfrac{4\epsilon}{\sqrt{A}}c+\tfrac{4\epsilon^2}{A},$$ where $c=max(1-\sqrt{A}+\epsilon, \sqrt{B} + \epsilon - 1)$.
\end{enumerate}
\end{lemma}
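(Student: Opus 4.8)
The plan is to pass to the rows in the common orthonormal basis $O$ from Remark~\ref{rowsframe} and do everything there. Since the metric $d$ is invariant under simultaneous left multiplication by the orthogonal matrix $O^T$, one has $d(\Phi,\Psi)=d(O^T\Phi,O^T\Psi)=\sqrt{\sum_{i=1}^d\|R_i-P_i\|^2}$, where $R_i,P_i$ are the rows of $[\Phi]_O=O^T\Phi$ and of $[\Psi]_O=O^T\Psi$. The key structural fact I would record first is that the frame matrix of $\Phi$ in this basis is $[\Phi]_O[\Phi]_O^T=O^TSO=D=\diag(\lambda_1,\dots,\lambda_d)$, so the rows $R_i$ are pairwise orthogonal with $\|R_i\|=\sqrt{\lambda_i}$; since the eigenvalues $\lambda_i$ of $S$ are bracketed by the frame bounds, $\sqrt A\le\|R_i\|\le\sqrt B$.

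For (a), each summand obeys $\|R_i-P_i\|\le d(\Phi,\Psi)<\epsilon$, so the reverse triangle inequality gives $\bigl|\,\|R_i\|-\|P_i\|\,\bigr|\le\|R_i-P_i\|<\epsilon$; combined with $\sqrt A\le\|R_i\|\le\sqrt B$ this yields $\sqrt A-\epsilon<\|P_i\|<\sqrt B+\epsilon$. For (b), I would compute the synthesis matrix of $F(\Phi)=S^{-1/2}\Phi$ in the basis $O$, namely $[F(\Phi)]_O=O^TS^{-1/2}\Phi=D^{-1/2}[\Phi]_O$, whose $i$-th row is $\lambda_i^{-1/2}R_i=R_i/\|R_i\|$. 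Hence $d(\Phi,F(\Phi))=\sqrt{\sum_{i=1}^d\|R_i-R_i/\|R_i\|\|^2}$, which is in fact an equality and in particular yields the stated inequality. Part (c) is the standard Lipschitz bound for normalization: writing $\tfrac{R_i}{\|R_i\|}-\tfrac{P_i}{\|P_i\|}=\tfrac{R_i-P_i}{\|R_i\|}+P_i\bigl(\tfrac{1}{\|R_i\|}-\tfrac{1}{\|P_i\|}\bigr)$ and using $\bigl|\,\|R_i\|-\|P_i\|\,\bigr|\le\|R_i-P_i\|$ together with $\|R_i\|\ge\sqrt A$ bounds the norm by $2\|R_i-P_i\|/\|R_i\|<2\epsilon/\sqrt A$.

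For (d), set $a=R_i/\|R_i\|$ and $b=P_i/\|P_i\|$. The lower bound is immediate because $b$ is the closest unit vector to $P_i$, so $\|P_i-b\|\le\|P_i-a\|$, giving a nonnegative difference of squares. For the upper bound I would factor the difference of squares as $\bigl(\|P_i-a\|-\|P_i-b\|\bigr)\bigl(\|P_i-a\|+\|P_i-b\|\bigr)$. The reverse triangle inequality and (c) bound the first factor by $\|a-b\|<2\epsilon/\sqrt A$. For the second factor I would use $\|P_i-b\|=\bigl|\,\|P_i\|-1\,\bigr|<c$, which follows from (a) upon recognizing $c=\max(1-\sqrt A+\epsilon,\ \sqrt B+\epsilon-1)$ as precisely an upper bound for $\bigl|\,\|P_i\|-1\,\bigr|$, and then $\|P_i-a\|\le\|P_i-b\|+\|b-a\|<c+2\epsilon/\sqrt A$, so the second factor is $<2c+2\epsilon/\sqrt A$. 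Multiplying the two bounds gives $\tfrac{4\epsilon}{\sqrt A}c+\tfrac{4\epsilon^2}{A}$.

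I expect the main obstacle to be obtaining the exact constant in (d): a naive Cauchy--Schwarz or a direct expansion produces a bound of the shape $\|P_i\|\,\|a-b\|^2$, which has the wrong dependence on $\epsilon$ and $c$. The clean form stated in the lemma really relies on the difference-of-squares factoring together with the identification of $c$ as a bound for $\bigl|\,\|P_i\|-1\,\bigr|=\|P_i-b\|$; the remaining parts (a)--(c) are routine once the reduction to the orthogonal rows $R_i$ in the basis $O$ is in place.
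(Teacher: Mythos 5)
Your proposal is correct and takes essentially the same route as the paper's proof: everything is done in the eigenbasis $O$, with the reverse triangle inequality for (a), the closest-unit-vector property and the orthogonality of the rows for (b), the same normalization estimate with $\|R_i\|\geq\sqrt{A}$ for (c), and for (d) your difference-of-squares factoring is algebraically equivalent to the paper's step of squaring the chain $\|P_i-\tfrac{P_i}{\|P_i\|}\|\leq\|P_i-\tfrac{R_i}{\|R_i\|}\|\leq\|P_i-\tfrac{P_i}{\|P_i\|}\|+\tfrac{2\epsilon}{\sqrt{A}}$. The only (harmless) difference is that in (b) you observe the rows of $F(\Phi)$ in the basis $O$ are exactly $R_i/\|R_i\|$, so you actually get equality where the paper only claims the inequality.
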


\begin{proof}
\begin{enumerate}
\item[(a)] This is trivial so we omit it.
\item[(b)] This follows immediately from the fact that the rows of a Parseval frame are an orthonormal set when written with respect to any orthonormal basis and $\dfrac{R_i}{\|R_i\|}$ is the closest unit norm vector  to $R_i$.
\item[(c)]
Since, $d(\Phi, \Psi)<\epsilon,$ we know that $\big|\|P_i\| - \|R_i\|\big| < \epsilon$. Hence
$$ \bigg\|\dfrac{P_i}{\|P_i\|}\cdot\|R_i\| - R_i\bigg\| \leq \bigg\|\dfrac{P_i}{\|P_i\|}\cdot\|R_i\| - P_i\bigg\| +\|P_i-R_i\|= \big|\|P_i\| - \|R_i\|\big|+\|P_i-R_i\|< 2\epsilon.$$
The result follows by recalling that $\|R_i\|\geq \sqrt{A}$.
\item[(d)]  It is clear that $\|P_i -\dfrac{P_i}{\|P_i\|}\| =|\|P_i\|-1| \leq max(1-\sqrt{A}+\epsilon, \sqrt{B} + \epsilon - 1) = c$.
By part (c) we know that  $\|\dfrac{P_i}{\|P_i\|} - \dfrac{R_i}{\|R_i\|}\| <  \dfrac{2\epsilon}{\sqrt{A}}$. Using the fact hat $\dfrac{P_i}{\|P_i\|}$ is the closest unit norm vector to $P_i$,  we see that
$$\|P_i - \dfrac{P_i}{\|P_i\|}\| \leq \|P_i - \dfrac{R_i}{\|R_i\|}\| \leq \|P_i - \dfrac{P_i}{\|P_i\|}\| + \dfrac{2\epsilon}{\sqrt{A}}.$$
The result follows by squaring the last inequality.
\end{enumerate}
\end{proof}

Finally, we have the following technical lemma, that contains the key argument in the proof of Theorem~\ref{cont-F}.

\begin{lemma}\label{contradiction-unifcont} Given $0<A\leq B<0$, fix $\Phi=\{\varphi_i\}_{i=1}^N\in \fr_{A,B}$. Let $\epsilon, \delta>0$ be such that $\dfrac{\delta}{\sqrt{d}} - \dfrac{2\epsilon}{\sqrt{A}}>0$ and $\sqrt{A} - \epsilon>0$.  Let $\Psi=\{\psi_i\}_{i=1}^N$ be such that  $d(\Phi, \Psi) <  \epsilon$, and $d(S_{\Phi}^{-1/2}\Phi, S^{-1/2}_{\Psi}\Psi) =d(\Phi^{\dag}, \Psi^{\dag})> \delta$.  Then,
$$\sum_{i=1}^{d}(\|P_i - R_{i}'\|^2 - \|P_i - \dfrac{P_i}{\|P_{i}\|}\|^2) \geq min(Cd'^2,C^2), $$ where
$d' = \dfrac{\delta}{\sqrt{d}} - \dfrac{2\epsilon}{\sqrt{A}}$, $C = \min(\sqrt{A} - \epsilon, 1)$, and $\{R_i'\}_{i=1}^d\subset \RR^d$ is the set of the rows of $S_{\Psi}^{-1/2}\Psi$.
\end{lemma}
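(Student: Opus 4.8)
The plan is to work entirely in the orthonormal basis $O$ that diagonalizes $S_\Phi$ (Remark~\ref{rowsframe}), so that all four frames $\Phi,\Psi,\Phi^\dag,\Psi^\dag$ are read off simultaneously through their rows in the \emph{same} basis. Writing $R_i$ for the rows of $\Phi$ (pairwise orthogonal with $\|R_i\|\in[\sqrt A,\sqrt B]$), $P_i$ for the rows of $\Psi$, $\tfrac{R_i}{\|R_i\|}$ for the rows of $\Phi^\dag$, and $R_i'$ for the rows of $\Psi^\dag$, the identity $[\Phi^\dag]_O = D^{-1/2}[\Phi]_O$ shows the rows of $\Phi^\dag$ are exactly the normalized rows of $\Phi$, while the fact that $\Psi^\dag$ is Parseval forces $\{R_i'\}_{i=1}^d$ to be an orthonormal set, so each $R_i'$ is a unit vector. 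Since the Frobenius norm is invariant under left multiplication by $O^T$, I would first rewrite the hypothesis $d(\Phi^\dag,\Psi^\dag)>\delta$ as $\sum_{i=1}^d \bigl\|\tfrac{R_i}{\|R_i\|}-R_i'\bigr\|^2 > \delta^2$.

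Next I would localize the discrepancy to a single coordinate by pigeonhole: there must be an index $j$ with $\bigl\|\tfrac{R_j}{\|R_j\|}-R_j'\bigr\| > \delta/\sqrt d$. Combining this with Lemma~\ref{rows-prop}(c), which gives $\bigl\|\tfrac{R_j}{\|R_j\|}-\tfrac{P_j}{\|P_j\|}\bigr\| < 2\epsilon/\sqrt A$, the reverse triangle inequality yields $\bigl\|\tfrac{P_j}{\|P_j\|}-R_j'\bigr\| > \delta/\sqrt d - 2\epsilon/\sqrt A = d'$. This is the step that converts ``the canonical Parseval frames are far apart'' into ``the normalized $j$-th row of $\Psi$ is far from the $j$-th row of $\Psi^\dag$.''

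I would then discard all but the $j$-th summand. Since each $R_i'$ is a unit vector and $\tfrac{P_i}{\|P_i\|}$ is the closest unit vector to $P_i$, every term $\|P_i-R_i'\|^2 - \bigl\|P_i-\tfrac{P_i}{\|P_i\|}\bigr\|^2$ is nonnegative (the reasoning of Lemma~\ref{rows-prop}(b),(d)), so the full sum is at least its $j$-th term. The crux is then a clean two-unit-vector identity: writing $u=\tfrac{P_j}{\|P_j\|}$, $v=R_j'$, and $P_j=\|P_j\|u$, a direct expansion gives $\|P_j-v\|^2 - \|P_j-u\|^2 = 2\|P_j\|(1-\langle u,v\rangle) = \|P_j\|\,\|u-v\|^2$. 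Inserting $\|u-v\|>d'$ and $\|P_j\|>\sqrt A-\epsilon$ (Lemma~\ref{rows-prop}(a)), the $j$-th term exceeds $(\sqrt A-\epsilon)(d')^2 \geq C(d')^2 \geq \min(Cd'^2,C^2)$, which is exactly the claimed bound.

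The main obstacle is bookkeeping rather than depth: one must be scrupulous that $R_i,P_i,R_i',\tfrac{R_i}{\|R_i\|}$ are all expressed in the single fixed basis $O$ and not in each frame's own eigenbasis, because only then does the index-by-index pairing in the metric line up so that the single pigeonhole index $j$ simultaneously controls $\Phi^\dag$, $\Psi^\dag$, and the normalized rows of $\Psi$. Once the row-indexing is pinned down, the reduction to one coordinate together with the identity $\|P_j-v\|^2-\|P_j-u\|^2=\|P_j\|\,\|u-v\|^2$ makes the estimate routine; the appearance of $\min(Cd'^2,C^2)$ rather than the sharper $Cd'^2$ is merely a convenient safety margin, since $\|u-v\|\leq 2$ forces $d'<2$ in any case.
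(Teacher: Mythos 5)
Your proof is correct, and its localization steps --- pigeonholing an index $j$ from $d(\Phi^{\dag},\Psi^{\dag})>\delta$, invoking Lemma~\ref{rows-prop}(c) and the triangle inequality to get $\|\tfrac{P_j}{\|P_j\|}-R_j'\|>d'$, and then discarding the remaining summands after noting each is nonnegative (because $R_i'$ is a unit vector and $\tfrac{P_i}{\|P_i\|}$ is the closest unit vector to $P_i$) --- coincide exactly with the paper's. Where you genuinely depart is the estimate of the surviving term. The paper handles it by a case analysis on the geometry of $P_k$, $\tfrac{P_k}{\|P_k\|}$, and $R_k'$ (the three figures: whether $\sqrt{A}-\epsilon\geq 1$, whether $\|P_k\|+\eta\leq 1$, whether $\eta\leq 1$), using obtuse-angle/Pythagoras arguments in the extreme cases and a coordinate computation yielding $2\eta\|P_k\|$ in the middle ones; the $C^2$ branch of the minimum is precisely what the paper falls back on when $\eta>1$. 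Your identity $\|P_j-v\|^2-\|P_j-u\|^2=2\|P_j\|(1-\langle u,v\rangle)=\|P_j\|\,\|u-v\|^2$ for unit vectors $u,v$ is the coordinate-free form of the paper's middle computation (note $\|u-v\|^2=2-2\langle u,v\rangle=2\eta$), but it is valid for \emph{every} configuration, so it subsumes all the cases at once and in fact proves the uniformly stronger conclusion that the sum is at least $C d'^2$: the $C^2$ term in the minimum is never needed. This is a real simplification: it eliminates the figures entirely, and it also sidesteps a delicate point in the paper's last case ($\eta>1$), where the claim $\|P_k-\tfrac{P_k}{\|P_k\|}\|^2\leq 1$ implicitly assumes $\|P_k\|\leq 2$, something the hypotheses (which only give $\|P_k\|<\sqrt{B}+\epsilon$) do not guarantee. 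Your bookkeeping caveat is also the right one: everything must be read in the single basis $O$ diagonalizing $S_\Phi$, and the rows of $O^{T}\Psi^{\dag}$ remain an orthonormal set because Parsevality is preserved under left multiplication by an orthogonal matrix, which is exactly what makes both the pigeonhole step and the nonnegativity step legitimate.
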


\begin{proof} 
We first show  that there exists $k$ then $$\|P_k- R_{k}'\|^2 - \|P_k - \dfrac{P_k}{\|P_{k}\|}\|^2 \geq min(\|R'_{k} - \frac{P_k}{\|P_k\|}\|^2 \cdot min(\|P_k\|,1),\|P_k\|^2).$$

Since $d(S_{\Phi}^{-1/2} \Phi,S_{\Psi}^{-1/2} \Psi) \geq \delta$,  then $\|\dfrac{R_k}{\|R_k\|} - R_{k}'\| \geq \dfrac{\delta}{\sqrt{d}}$ for some $k$. By Lemma~\ref{rows-prop} we know that   $\|\dfrac{P_k}{\|P_k\|} - \dfrac{R_k}{\|R_k\|}\| <  \dfrac{2\epsilon}{\sqrt{A}}$. It follows from the triangle inequality that
 $$\|\dfrac{P_k}{\|P_k\|} - R_{k}'\| \geq \dfrac{\delta}{\sqrt{d}} - \dfrac{2\epsilon}{\sqrt{A}} = d'.$$

Suppose that $C =\min(\sqrt{A} - \epsilon, 1)= 1$, or equivalently,  $\sqrt{A} - \epsilon \geq 1$. Hence, by Lemma~\ref{rows-prop} we have $\|P_i\|\geq 1$ for each for all $i$.

Since the angle $\widehat{R_k'\tfrac{P_k}{\|P_k\|}P_i}> \pi/2$, it follows that   $$\|P_k - R_{k}'\|^2 > \|P_k - \dfrac{P_k}{\|P_k\|}\|^2 + \|\dfrac{P_k}{\|P_k\|} - R_{k}'\|^2.$$ But since,  $\|\dfrac{P_k}{\|P_k\|} - R_{k}'\| \geq \dfrac{\delta}{\sqrt{d}} - \dfrac{2\epsilon}{\sqrt{A}}$,  we conclude that
$$  \|P_k- R_{k}'\|^2 - \|P_k - \dfrac{P_k}{\|P_k\|}\|^2>   \|\dfrac{P_k}{\|P_k\|} - R_{k}'\|^2 \geq d'^2 = Cd'^2$$ and we are done.

Assume now $C=\sqrt{A}-\epsilon < 1$  and $\|P_k\|+\eta \leq 1$, where $\eta$ is defined in Figure~\ref{fig:figure1}.

\begin{figure}[htbp]
\begin{center}
\includegraphics[scale=.35]{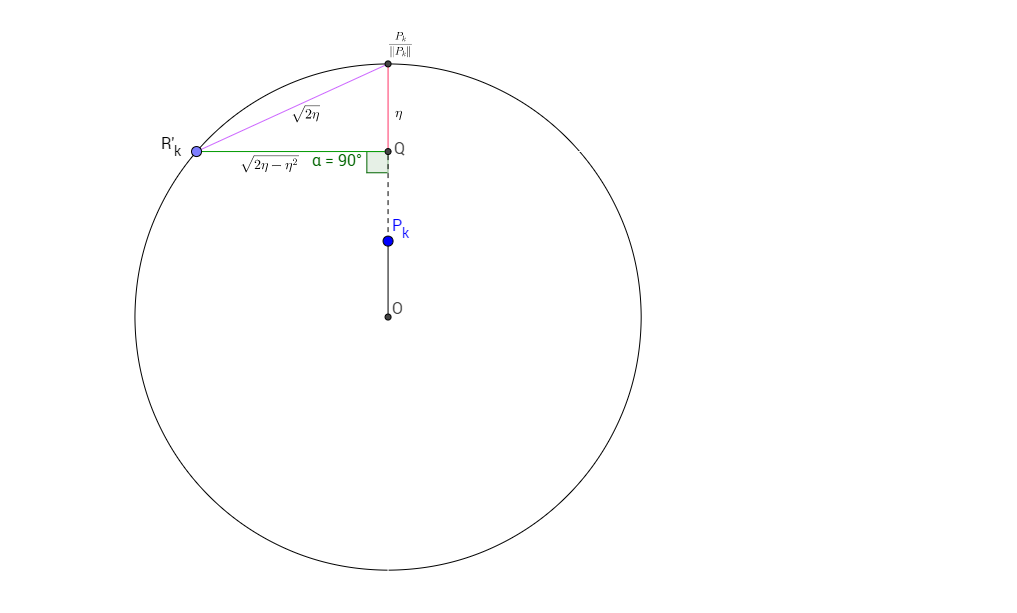}
\end{center}
\caption{  $Q$ is  the orthogonal projection of $R'_k$ onto  $P_k$, and $\eta= \|Q - \frac{P_k}{\|P_k\|}\|$. }\label{fig:figure1}
\end{figure}

Then,
\begin{align*}
\bigg\|P_k - R_{k}'\|^2 - \|P_k - \dfrac{P_k}{\|P_{k}\|}\bigg\|^2 &=
(1 - ( \|P_k\|+ \eta))^2 + 2\eta - \eta^2 - (1-\|P_k\|)^2 \\
&= 2\eta \|P_k\| \\
&= \bigg\|\dfrac{P_k}{\|P_k\|} - R_{k}'\bigg\|^{2}\|P_k\|.
\end{align*} The  the conclusion follows  from  $\|\dfrac{P_k}{\|P_k\|} - R_{k}'\|^{2} \geq d'^2$.

Now assume $\|P_k\|+\eta > 1$ and $\eta \leq 1$, where $\eta$ is defined in Figure~\ref{fig:figure2}.

\begin{figure}[htbp]
\begin{center}
\includegraphics[scale=.35]{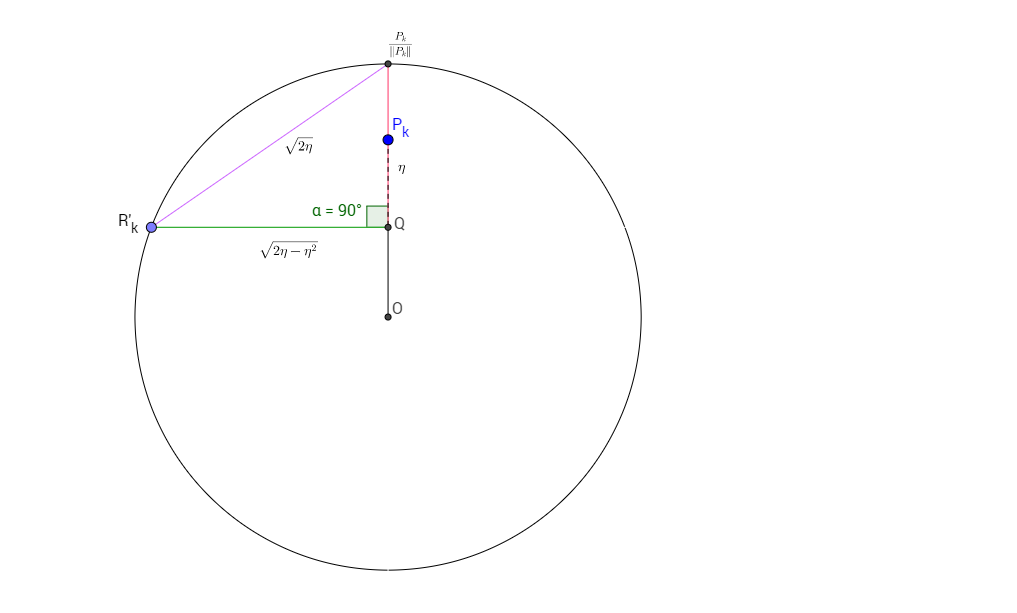}
\end{center}
\caption{$Q$ is  the orthogonal projection of $R'_k$ onto  $P_k$, and $\eta= \|Q - \frac{P_k}{\|P_k\|}\|$.}\label{fig:figure2}
\end{figure}

$$\bigg\|P_k - R_{k}'\|^2 - \|P_k - \dfrac{P_k}{\|P_{k}\|}\bigg\|^2 = ((\|P_k\| + \eta) - 1)^2 + 2\eta - \eta^2 - (1-\|P_k\|)^2 = 2\eta \|P_k\|$$ and the rest of the proof is similar to the one given above.

If $\eta > 1$ where where $\eta$ is defined in Figure~\ref{fig:figure3}, then the angle $\angle P_{k}0R'_{k} > \frac{\pi}{2}$ hence $\|P_k - R_{k}'\|^2 > \|P_k\|^2 + 1$. We know $\|P_k - \dfrac{P_k}{\|P_{k}\|}\|^2 \leq 1$ hence $$\|P_k - R_{k}'\|^2 - \|P_k - \dfrac{P_k}{\|P_{k}\|}\|^2 > \|P_k\|^2 \geq C^2$$

\begin{figure}[htbp]
\begin{center}
\includegraphics[scale=.35]{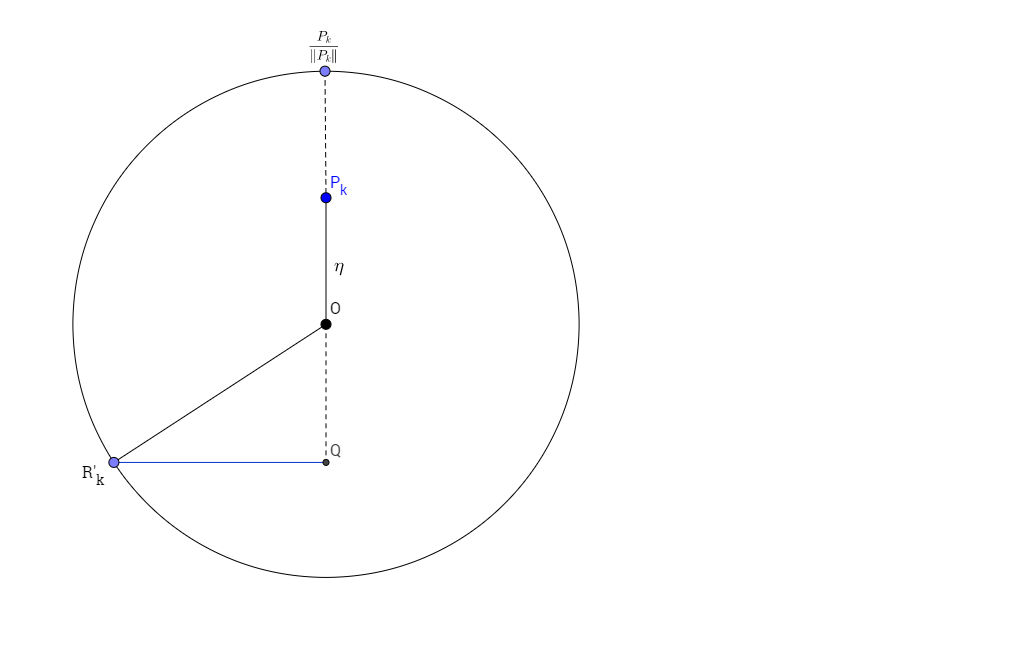}
\end{center}
\caption{$Q$ is  the orthogonal projection of $R'_k$ onto  $P_k$, and $\eta= \|Q - \frac{P_k}{\|P_k\|}\|$.}\label{fig:figure3}
\end{figure}

\end{proof}
We are now ready to prove Theorem~\ref{cont-F}.

\begin{proof}[Proof of Theorem~\ref{cont-F}]
Assume by way of contradiction that there exists $\delta > 0$ such that for all $\epsilon > 0$ there exist $\Phi_{\epsilon}=\{\varphi_{i,\epsilon}\}_{i=1}^{N_{\epsilon}}, \in \fr_{A, B}.$ and
$\Psi_\epsilon=\{\psi_{i,\epsilon}\}_{i=1}^{N_{\epsilon}}$

 such that $$d(\Phi_{\epsilon}, \Psi_{\epsilon})< \epsilon$$ and $$d(S_{\Phi_{\epsilon}}^{-1/2}\Phi_{\epsilon}, S_{\Psi_{\epsilon}}^{-1/2}\Psi_{\epsilon}) > \delta.$$  Furthermore, choose  $\epsilon$ small enough so that $\dfrac{\delta}{\sqrt{d}} - \dfrac{2\epsilon}{\sqrt{A}}>0$ and $\sqrt{A} - \epsilon>0$ and
  $$\sum_{i=1}^{d}(\|P_i - \dfrac{R_i}{\|R_i\|}\|^2 - \|P_i - \dfrac{P_i}{\|P_i\|}\|^2) <  min(Cd'^2,C^2)$$ where $C$ and $d'^2$ are as in Lemma~\ref{contradiction-unifcont}(such $\epsilon$ exists by Lemma~\ref{rows-prop}).

  Hence $$\sum_{i=1}^{d}(\|P_i - \dfrac{R_i}{\|R_i\|}\|^2 - \|P_i - \dfrac{P_i}{\|P_i\|}\|^2) <  \sum_{i=1}^{d}(\|P_i - R_{i}'\|^2 - \|P_i - \dfrac{P_i}{\|P_{i}\|}\|^2)$$ Consequently,  $\sum_{i=1}^{d}\|P_i - \dfrac{R_i}{\|R_i\|}\|^2 <  \sum_{i=1}^{d}\|P_i - R_{i}'\|^2$ contradicting that $R_{i}'$ are the rows of the closest Parseval frame to $\Psi_{\epsilon}=\{\psi_{i,\epsilon}\}_{i=1}^{N_{\epsilon}}$.
\end{proof}

\subsection{Approximation of probabilistic frames in the $2-$ Wasserstein metric}\label{subsec2.2}
In this section we prove some of the technical results needed to establish our main result.
The key idea is that a probabilistic frame $\mu$ with frame bounds $A, B$ can be approximated in the Wasserstein metric by a finite probabilistic frame whose bounds are arbitrarily close to $A, B$.   We prove this statement in Proposition~\ref{density-dpf} and point out that it is a refinement of a well-known result, e.g.,  \cite[Theorem 6.18]{Villani2009}. But first, we prove a few new results about finite probabilistic frames that are of interest in their own right.  In particular, Lemma~\ref{frame_modi} will be a very useful technical tool that we shall often use. It shows that given a finite frame we may replace any frame vector by a finite number of new vectors so as to leave unchanged the frame operator. More specifically,

\begin{lemma}\label{frame_modi}
Given a frame $\Phi=\{\varphi_{i}\}_{i=1}^N$ with frame operator $S_\Phi$. Fix $i\in \{1, 2, \hdots, N\}$ and consider the new set of vectors $$\Phi_i=\{\varphi_k\}_{k=1, k\neq i}^N \cup\{a_{j}\varphi_i\}_{j=1}^{p}= \{\varphi_k'\}_{k=1}^{N+p-1} $$ where $\sum_{j=1}^{p}a_{j}^{2} = 1$. Then, $\Phi_i\in \fr(N+p-1, d)$, that is, $\Phi_i$ is  a frame for $\RR^d$ and its frame operator is $S_{\Phi}$. Furthermore, $$\sum_{k=1}^{N}\|\varphi_k- \varphi_k^{\dag}\|^2 = \sum_{k=1}^{N+p-1}\|\varphi_k' -  \varphi_k^{' \dag}   \|^2$$ where $\varphi_k^{\dag}=S^{-1/2}\varphi_k$ and $\varphi_k^{'\dag}=S^{-1/2}\varphi'_k$
\end{lemma}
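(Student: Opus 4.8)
The plan is to verify all three assertions by a single direct computation, the point being that splitting the vector $\varphi_i$ into the scaled copies $\{a_j\varphi_i\}_{j=1}^p$ with $\sum_{j=1}^p a_j^2=1$ changes neither the rank-one contribution of $\varphi_i$ to the frame operator nor its contribution to the sum of squared distances to the canonical dual.

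First I would compute the frame operator of $\Phi_i$ directly. Writing $S_{\Phi_i}=\sum_{k\ne i}\ip{\cdot}{\varphi_k}\varphi_k+\sum_{j=1}^p\ip{\cdot}{a_j\varphi_i}(a_j\varphi_i)$ and using bilinearity, the second sum equals $\big(\sum_{j=1}^p a_j^2\big)\ip{\cdot}{\varphi_i}\varphi_i=\ip{\cdot}{\varphi_i}\varphi_i$ because $\sum_j a_j^2=1$. Hence $S_{\Phi_i}=\sum_{k=1}^N\ip{\cdot}{\varphi_k}\varphi_k=S_\Phi$. Since $\Phi$ is a frame, $S_\Phi$ is positive definite, so $S_{\Phi_i}=S_\Phi$ is positive definite too; by the characterization recalled in Section~\ref{subsec1.1} this shows $\Phi_i\in\fr(N+p-1,d)$ and that its frame operator is exactly $S_\Phi$.

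The identity about canonical duals then follows painlessly. Because the two frame operators coincide, the same matrix $S^{-1/2}$ (with $S=S_\Phi$) defines both canonical Parseval frames, so $\varphi_k^\dag=S^{-1/2}\varphi_k$ and $\varphi_k^{'\dag}=S^{-1/2}\varphi_k'$. Setting $T=I-S^{-1/2}$, one has $\|\varphi_k-\varphi_k^\dag\|^2=\|T\varphi_k\|^2$ and likewise for the primed vectors. For every index $k\ne i$ the vector $\varphi_k'$ is just (a copy of) $\varphi_k$, so those terms agree on both sides and cancel; the only discrepancy is the contribution of $\varphi_i$. On the left it contributes $\|T\varphi_i\|^2$, while on the right the copies $\{a_j\varphi_i\}_{j=1}^p$ contribute $\sum_{j=1}^p\|T(a_j\varphi_i)\|^2=\big(\sum_{j=1}^p a_j^2\big)\|T\varphi_i\|^2=\|T\varphi_i\|^2$, again by homogeneity of the norm and $\sum_j a_j^2=1$. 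Summing, the two totals agree, which is the claimed equality.

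I do not anticipate a genuine obstacle here: the lemma is essentially the two elementary observations that the rank-one operator $\ip{\cdot}{v}v$ and the scalar $\|Tv\|^2$ are both quadratic (degree-two homogeneous) in $v$, so each is preserved under the splitting with $\ell^2$-normalized coefficients. The one point that must be made explicit, and on which the whole argument hinges, is that the equality of frame operators forces the \emph{same} $S^{-1/2}$ for both systems; were the operators different, the two canonical duals would be computed with different matrices and no term-by-term comparison would be available.
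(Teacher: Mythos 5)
Your proof is correct and follows essentially the same route as the paper: the paper's entire proof is the one-line observation that $\sum_{k=1}^{N}|\ip{x}{\varphi_k}|^2 = \sum_{k\ne i}|\ip{x}{\varphi_k}|^2 + \sum_{j=1}^{p}a_j^2|\ip{x}{\varphi_i}|^2$ for every $x$, i.e.\ the same quadratic-homogeneity computation you carry out for the frame operator. Your write-up is in fact more complete, since the paper leaves the ``Furthermore'' identity to the reader, whereas you verify it explicitly via $T=I-S^{-1/2}$, the observation that equality of the two frame operators forces the same $S^{-1/2}$ on both sides, and the same $\sum_j a_j^2=1$ normalization.
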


\begin{proof} It is easy to see that for each $x\in \RR^d$ we have
$$\sum_{k=1}^{N}|\ip{x}{\varphi_k}|^2 = \sum_{k=1}^{i-1}|\ip{x}{\varphi_k}|^2 + \sum_{j=1}^{p}a_j^2|\ip{x}{\varphi_i}|^2 + \sum_{k=i+1}^{N}|\ip{x}{\varphi_k}|^2 .$$
\end{proof}

We now use Lemma~\ref{frame_modi} and Theorem~\ref{uniqueness_prob_1} to find the closest Parseval frame to a finite probabilistic frame in the $2$-Wasserstein metric.

\begin{proposition}\label{dists-to-parseval}
Let  $\mu_{\Phi, w}$ be a finite probabilistic frame with bounds $A$ and $B$, where $\Phi=\{\varphi_{i}\}_{i=1}^N \subset \RR^{d}$ and  $w=\{w_i\}_{i=1}^N \subset [0, \infty)$. Then the closest finite Parseval probabilistic frame to $\Phi$ is $\Phi^\dag=\{S^{-1/2}\varphi_i\}_{i=1}^N$ and it satisfies

$$W_{2}(\mu_{\Phi, w}, \mu_{\Phi^{\dag}, w})=\sqrt{\sum_{i=1}^{N}w_{i}\|\varphi_i - \tilde{\varphi}_i\|^2} \leq  \sqrt{d\, \max((\sqrt{A} - 1)^2,(\sqrt{B} - 1)^2)}$$
 where $\tilde{\varphi}_i=S^{-1/2}\varphi_i$.
\end{proposition}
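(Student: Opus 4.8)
The plan is to reduce everything to the finite-frame statement Theorem~\ref{uniqueness_prob_1} by exploiting the identification of the finite probabilistic frame $\mu_{\Phi,w}$ with the frame $\Phi_w=\{\sqrt{w_i}\varphi_i\}_{i=1}^N$, whose frame operator is $S_{\Phi_w}=\sum_i w_i\varphi_i\varphi_i^T=S$, together with the atom-splitting device of Lemma~\ref{frame_modi}. Throughout I write $\tilde\varphi_i=S^{-1/2}\varphi_i$, so that $\mu_{\Phi^\dag,w}$ corresponds to $\Phi_w^\dag=\{\sqrt{w_i}\,\tilde\varphi_i\}_{i=1}^N$, which is Parseval by Theorem~\ref{uniqueness_prob_1}. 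The easy half is an upper bound: pairing $\varphi_i$ with $\tilde\varphi_i$ and assigning mass $w_i$ defines a plan $\gamma=\sum_i w_i\delta_{(\varphi_i,\tilde\varphi_i)}\in\Gamma(\mu_{\Phi,w},\mu_{\Phi^\dag,w})$, so that $W_2^2(\mu_{\Phi,w},\mu_{\Phi^\dag,w})\le\sum_i w_i\|\varphi_i-\tilde\varphi_i\|^2$.

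The heart of the argument is the matching lower bound, valid against an arbitrary finite Parseval probabilistic frame $\nu=\sum_j v_j\delta_{\psi_j}$. Since both measures are discrete, the optimal plan realizing $W_2(\mu_{\Phi,w},\nu)$ may be taken finitely supported, say $\gamma=\sum_{i,j}\gamma_{ij}\delta_{(\varphi_i,\psi_j)}$ with $\sum_j\gamma_{ij}=w_i$ and $\sum_i\gamma_{ij}=v_j$. I then refine both measures along $\gamma$ without moving any mass: I replace $\sqrt{w_i}\varphi_i$ by the family $\{\sqrt{\gamma_{ij}}\varphi_i\}_j$ (a legitimate splitting in Lemma~\ref{frame_modi}, since $\sum_j\gamma_{ij}/w_i=1$) to obtain a frame $\hat\Phi$, and likewise replace $\sqrt{v_j}\psi_j$ by $\{\sqrt{\gamma_{ij}}\psi_j\}_i$ to obtain $\hat\Psi$. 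By Lemma~\ref{frame_modi}, $\hat\Phi$ has frame operator $S$ and $\hat\Psi$ has frame operator $I$, i.e.\ $\hat\Psi$ is still Parseval. Because $\hat\Phi$ and $\hat\Psi$ are now indexed by the same pairs, the diagonal coupling recovers $\gamma$ and $d(\hat\Phi,\hat\Psi)^2=\sum_{i,j}\gamma_{ij}\|\varphi_i-\psi_j\|^2=W_2^2(\mu_{\Phi,w},\nu)$. Applying Theorem~\ref{uniqueness_prob_1} to $\hat\Phi$ (whose canonical Parseval frame is $\hat\Phi^\dag=\{\sqrt{\gamma_{ij}}\,\tilde\varphi_i\}$) together with the invariance clause of Lemma~\ref{frame_modi} gives $W_2^2(\mu_{\Phi,w},\nu)=d(\hat\Phi,\hat\Psi)^2\ge d(\hat\Phi,\hat\Phi^\dag)^2=\sum_{i,j}\gamma_{ij}\|\varphi_i-\tilde\varphi_i\|^2=\sum_i w_i\|\varphi_i-\tilde\varphi_i\|^2$. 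Specializing to $\nu=\mu_{\Phi^\dag,w}$ and combining with the upper bound yields the claimed equality $W_2(\mu_{\Phi,w},\mu_{\Phi^\dag,w})=\sqrt{\sum_i w_i\|\varphi_i-\tilde\varphi_i\|^2}$, while the general inequality shows $\mu_{\Phi^\dag,w}$ is the closest finite Parseval probabilistic frame.

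For the final estimate I rewrite the sum as a trace. Since $\varphi_i-\tilde\varphi_i=(I-S^{-1/2})\varphi_i$, we get $\sum_i w_i\|\varphi_i-\tilde\varphi_i\|^2=\sum_i w_i\langle(I-S^{-1/2})^2\varphi_i,\varphi_i\rangle=\tr\!\big((I-S^{-1/2})^2 S\big)=\sum_{k=1}^d(\sqrt{\lambda_k}-1)^2$, where $\lambda_1,\dots,\lambda_d$ are the eigenvalues of $S$ (using $(1-\lambda^{-1/2})^2\lambda=(\sqrt\lambda-1)^2$). As $A,B$ are frame bounds we have $\lambda_k\in[A,B]$, and since $t\mapsto(\sqrt t-1)^2$ attains its maximum over $[A,B]$ at an endpoint, $(\sqrt{\lambda_k}-1)^2\le\max\big((\sqrt A-1)^2,(\sqrt B-1)^2\big)$ for each $k$; summing over the $d$ eigenvalues gives the stated bound $\sqrt{d\,\max((\sqrt A-1)^2,(\sqrt B-1)^2)}$.

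The step I expect to require the most care is the refinement: one must invoke (or prove) that an optimal plan between two finitely supported measures is itself finitely supported with the marginal constraints $\sum_j\gamma_{ij}=w_i$, $\sum_i\gamma_{ij}=v_j$, and then verify that the simultaneous atom-splitting of $\mu_{\Phi,w}$ and of $\nu$ is exactly an iterate of Lemma~\ref{frame_modi} on each side, so that the frame operators $S$ and $I$ (hence the Parseval property of $\hat\Psi$ and the invariance of $\sum\|\varphi_k-\varphi_k^\dag\|^2$) are genuinely preserved. Once this bookkeeping is in place, the remaining steps are a direct trace computation and a citation of Theorem~\ref{uniqueness_prob_1}.
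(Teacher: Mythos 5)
Your proposal is correct and follows essentially the same route as the paper: both obtain the upper bound from the diagonal coupling together with the eigenvalue estimate $\sum_{k=1}^d(\sqrt{\lambda_k}-1)^2\leq d\,\max((\sqrt{A}-1)^2,(\sqrt{B}-1)^2)$, and both establish optimality by splitting the atoms of an optimal discrete plan via Lemma~\ref{frame_modi} and invoking Theorem~\ref{uniqueness_prob_1}. The only differences are cosmetic: you phrase optimality as a direct lower bound where the paper argues by contradiction, and you compute the eigenvalue bound through a trace identity where the paper works with the rows of $\Phi_w$ written in the eigenbasis of $S$.
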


\begin{proof}
We first prove that $W_{2}^2(\mu_{\Phi, w}, \mu_{\Phi^{\dag}, w}) \leq  d\, \max((\sqrt{A} - 1)^2,(\sqrt{B} - 1)^2).$

Let  $\Phi_w=\{\sqrt{w_i}\varphi_{i}\}_{i=1}^N$. Let $S=\Phi_w\Phi^T_w=ODO^T$ be the frame operator of $\Phi_w$. Consider the columns of $O$ as an orthonormal basis for $\RR^d$. Writing the vectors $\sqrt{w}_k\varphi_k$ with respect to this basis leads to  $\Phi_w'=O^T\Phi_w$ where  \[\Phi_w = \left( \begin{array}{ccc}
| & ... & | \\
\sqrt{w_1}\varphi_1 & ... & \sqrt{w_n}\varphi_m \\
| & ... & | \end{array} \right)\]
Let $\{P_{k, w}\}_{k=1}^d$ and $\{R_{k, w}\}_{k=1}^d$  respectively denote the rows of $\Phi_w'$ and  $\Phi_w$. Notice that $$\sqrt{A}\leq \|P_{k,w}\|\leq \sqrt{B}, \quad \forall\, k=1, 2, \hdots, d.$$ It is easily seen that

$$\min_{u\in \RR^d, \|u\|=1}\|P_{k,w}-u\|^2=\|P_{k,w}-\tfrac{P_{k,w}}{\|P_{k,w}\|}\|^2=|\|P_{k,w}\|-1|^2\leq \max((\sqrt{A} - 1)^2,(\sqrt{B} - 1)^2).$$ But by construction, $\ip{P_{k,w}}{P_{\ell,w}}=0$ for $k\neq \ell$, and $ \tfrac{P_{k,w}}{\|P_{k,w}\|}=\lambda_k^{-1/2}P_{k,w}$ where $\lambda_k$ is the $k^{th}$ eigenvalue of $S$. Consequently, $\{\lambda_k^{-1/2}P_{k,w}\}_{k=1}^d$ represents the rows of the canonical tight frame $S^{-1/2}\Phi_w$ written in the orthonormal basis $O$. Therefore,

$$d(\Phi_w, S^{-1/2}\Phi_w)^2=\sum_{k=1}^d\|P_{k,w}-\lambda_k^{-1/2}P_{k,w}\|^2\leq d \max((\sqrt{A} - 1)^2,(\sqrt{B} - 1)^2).$$

Clearly,
$$W_{2}^2(\mu_{\Phi, w}, \mu_{S^{-1/2}\Phi, w}) \leq \sum_{i=1}^{N}w_{i}\|\varphi_{i} - S^{-1/2}\varphi_{i}\|^2=d(\Phi_w, S^{-1/2}\Phi_w)^2\leq d \max((\sqrt{A} - 1)^2,(\sqrt{B} - 1)^2) .$$
Suppose there exists a finite probabilistic Parseval frame $\mu_{\Psi, v}$ where $\Psi =\{\psi_i\}_{i=1}^{M}\subset \RR^d$, $v=\{v_i\}_{i=1}^M\subset [0, \infty)$ such that $$W_{2}^2(\mu_{\Phi, w}, \mu_{\Psi, v}) < \sum_{i=1}^{N}w_{i}\|\varphi_{i} - S^{-1/2}\varphi_{i}\|^2.$$
Let $\gamma \in \Gamma (\mu_{\Phi, w}, \mu_{\Psi, v})$ be such that $$W_2^2(\mu_{\Phi, w}, \mu_{\Psi, v}) =\iint_{\RR^{2d}} \|x-y\|^2d\gamma(x,y).$$
Note that $\gamma$ is a discrete measure with $\gamma(x,y)=\sum_{i, j}w'_{i,j}\delta_{\varphi_{i}}(x)\delta_{\psi_{i}}(y)$ with
 $\sum_{j}w'_{i,j} = w_i$ and $\sum_{i}w'_{i,j} = v_j$.

Furthermore, by assumption $$W_{2}^2(\mu_{\Phi, w}, \mu_{\Psi, v})=\sum_{i,j}w'_{i,j}\|\varphi_i - \psi_j\|^2 <  \sum_{i=1}^{N}w_{i}\|\varphi_{i} - S^{-1/2}\varphi_{i}\|^2.$$
Notice since $\sum_{i}w'_{i,j} = v_j$ the frame $\Psi'=\{\sqrt{w'_{i,j}}\psi_j\}_{i,j}$ is a Parseval frame. Since $\sum_{j}w'_{i,j} = w_i$, it easy to see that $\sum_{j} \tfrac{w'_{i,j}}{w_i}   =1$.  We now  use Lemma~\ref{frame_modi}. For each $i,$ replace  $\sqrt{w_i}\varphi_i$ with $\{\sqrt{w_{i,j}'}\varphi_i\}_{j}$. This results in a frame $\Phi'=\{\sqrt{w_{i,j}'}\varphi_i\}_{i, j}$. Consequently, $d(\Phi', \Psi')=d(\Phi_w, \Psi_v)<d(\Phi_w, \Phi_w^{\dag})$ where $\Psi_v$ is a Parseval frame. This is a contradiction.
\end{proof}

The next result is one of our key technical results. It allows us to approximate a probabilistic frame in the $2$-Wasserstein metric with a compactly supported finite probabilistic frame whose bounds are controlled by those of the original probabilistic frame.

\begin{theorem}\label{density-dpf}
Let $\mu$ be a  probabilistic frame with frame bounds $A$ and $B$, and $\epsilon > 0$. Then,  there exists a finite probabilistic $\mu_{\Phi}$ with frame bounds $A', B'$ such that $A'\geq A-\epsilon$, $B'\leq B+\epsilon$ and $$\|\mu - \mu_{\Phi}\|_{W_2}<\epsilon.$$
\end{theorem}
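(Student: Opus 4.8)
The plan is to combine the classical Wasserstein density of finitely supported measures with a perturbation argument that keeps the frame operator close to that of $\mu$, since controlling the frame bounds is the only content beyond the standard approximation. I would proceed in two conceptual stages: first produce a finitely supported measure close to $\mu$ in $W_2$ whose second-moment matrix $S_\nu=\int y y^T\,d\nu(y)$ is close to $S_\mu$, and then argue that closeness of the matrix of second moments forces closeness of the frame bounds (which are the extreme eigenvalues of that matrix).

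First I would invoke the standard fact \cite[Theorem 6.18]{Villani2009} that finitely supported probability measures are $W_2$-dense in $\mathcal{P}_2(\RR^d)$, to obtain a finitely supported $\nu$ with $W_2(\mu,\nu)$ as small as we like. The key observation is that the map $\nu\mapsto S_\nu=\int y y^T\,d\nu(y)$ is continuous in a suitable sense on $W_2$-bounded sets: since $W_2^2(\mu,\nu)=\int\|x-y\|^2\,d\gamma$ for an optimal coupling $\gamma$, I can bound each entry $|(S_\mu)_{k\ell}-(S_\nu)_{k\ell}|$ by an expression like $\int|x_k x_\ell - y_k y_\ell|\,d\gamma$, and then use the elementary inequality $|x_k x_\ell - y_k y_\ell|\le |x_k|\,|x_\ell-y_\ell| + |y_\ell|\,|x_k-y_k|$ together with Cauchy--Schwarz in $\gamma$. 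This yields $\|S_\mu - S_\nu\|\le C\,(M_2(\mu)+M_2(\nu))\,W_2(\mu,\nu)$ for some dimensional constant $C$; since $M_2(\nu)\to M_2(\mu)$ as $\nu\to\mu$, the second moments stay bounded, and so $\|S_\mu-S_\nu\|\to 0$ as $W_2(\mu,\nu)\to 0$.

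Next, because the lower and upper frame bounds of a probabilistic frame are exactly the smallest and largest eigenvalues of its matrix of second moments (the measure-theoretic analogue of $S=\Phi\Phi^T$), Weyl's inequality gives $|\lambda_{\min}(S_\nu)-\lambda_{\min}(S_\mu)|\le\|S_\mu-S_\nu\|$ and likewise for $\lambda_{\max}$. Hence by choosing $W_2(\mu,\nu)$ small enough I can guarantee $A'=\lambda_{\min}(S_\nu)\ge A-\epsilon$ and $B'=\lambda_{\max}(S_\nu)\le B+\epsilon$ simultaneously, and $W_2(\mu,\nu)<\epsilon$. Note that once $A'>0$, the support of $\nu$ necessarily spans $\RR^d$, so $\nu$ is genuinely a finite probabilistic frame, and I would set $\mu_\Phi=\nu$. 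The compact support requested in the surrounding text is automatic, since a finitely supported measure has finite support.

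The main obstacle is technical rather than conceptual: the off-the-shelf density theorem gives no control over the atoms of $\nu$, so I must verify that the quantitative estimate $\|S_\mu-S_\nu\|\le C(M_2(\mu)+M_2(\nu))W_2(\mu,\nu)$ is robust — in particular that the second moment $M_2(\nu)$ does not blow up along the approximating sequence. This is where the optimal coupling is essential: $M_2(\nu)\le M_2(\mu)+W_2(\mu,\nu)$ by the triangle inequality in $\mathcal{P}_2$, so the bound is self-improving and the whole argument closes. A secondary subtlety is that the statement asks for the frame bounds of $\mu_\Phi$ to straddle those of $\mu$ by at most $\epsilon$, which the two-sided Weyl estimate handles uniformly; I would simply absorb all constants by shrinking the allowed $W_2$-distance at the outset.
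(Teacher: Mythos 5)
Your proposal is correct, but it follows a genuinely different route from the paper. The paper proves the theorem constructively in two steps: Lemma~\ref{prop-2} truncates $\mu$ to a ball $B(0,R_1)$ (dumping the exterior mass at the origin) and checks directly that this costs at most $\epsilon\|x\|^2$ in the frame inequality, and Lemma~\ref{prop-3} then discretizes the compactly supported measure on a mesh of cubes of shrinking side length, again verifying the frame bounds by an explicit estimate on $\bigl|\int\ip{x}{y}^2d\mu(y)-\sum_k\ip{x}{c_k}^2\mu(Q_k)\bigr|$. You instead take the abstract $W_2$-density of finitely supported measures as a black box and control the frame bounds through the chain: optimal bounds $=$ extreme eigenvalues of $S_\mu$, the quantitative continuity $\|S_\mu-S_\nu\|\leq C\,(M_2(\mu)+M_2(\nu))\,W_2(\mu,\nu)$ via an optimal coupling and Cauchy--Schwarz, and Weyl's eigenvalue perturbation inequality; the bound $M_2(\nu)\leq M_2(\mu)+W_2(\mu,\nu)$ closes the argument. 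All steps check out, and your matrix-continuity estimate is in fact the same inequality the paper later quotes from the literature (there attributed to [WCKO17], with $\|S_\mu-S_{\mu_n}\|\leq CW_2(\mu_n,\mu)$) in the run-up to Theorem~\ref{maintheorem}, so your route is fully consistent with the paper's toolkit and is arguably shorter and more conceptual. What the paper's construction buys in exchange is reusability and explicitness: the compactly supported truncation $\nu_\epsilon$ of Lemma~\ref{prop-2}, with its known support radius, is used again in the proof of Theorem~\ref{maintheorem} (to get uniform continuity of $f\circ S_\mu^{-1/2}$ on a fixed ball and to control tail integrals), and the cube discretization gives a concrete approximating sequence with an explicit coupling, neither of which your argument provides.
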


To establish this result we first prove the following two Lemmas.

\begin{lemma}\label{prop-2}
Let $\mu$ be a probabilistic frame with frame bound $A$ and $B$. Given $\epsilon>0$, there exists a probabilistic frames $\nu$ with compact support and frame bounds $A', B'$ such that
\begin{enumerate}
\item[(a)] $W_2^2(\mu, \nu)< \epsilon$,
\item[(b)] $A'\geq A-\epsilon$, and $B'=B$.
\end{enumerate}
\end{lemma}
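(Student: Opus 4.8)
The plan is to truncate $\mu$ to a large ball and then transport the escaping tail mass to a single, carefully chosen point, so as to restore the top eigenvalue of the second moment matrix \emph{exactly} to $B$. Write $S_\mu=\int_{\RR^d}yy^{T}\,d\mu(y)$ for the frame matrix of $\mu$, so that $\int_{\RR^d}\abs{\ip{x}{y}}^2\,d\mu(y)=\ip{S_\mu x}{x}$ and the optimal bounds are $A=\lambda_{\min}(S_\mu)$, $B=\lambda_{\max}(S_\mu)$. For $R>0$ put $B_R=\{y\in\RR^d:\norm{y}\le R\}$, let $m=\mu(\RR^d\setminus B_R)$ be the tail mass, and set $E_R=\int_{\RR^d\setminus B_R}yy^{T}\,d\mu(y)$ and $S_R=\int_{B_R}yy^{T}\,d\mu(y)=S_\mu-E_R$. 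Since $\mu\in\mathcal P_2$, dominated convergence gives $\tr(E_R)=\int_{\norm{y}>R}\norm{y}^2\,d\mu(y)\to 0$ as $R\to\infty$, and $E_R\ge 0$ yields $\norm{E_R}\le\tr(E_R)$. If $m=0$ for some $R$ then $\mu$ is already compactly supported and $\nu=\mu$ works, so I assume $m>0$.

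Next I would build the compensating atom. Let $\eta=B-\lambda_{\max}(S_R)$; since $0\le S_R\le S_\mu$, Weyl's inequality gives $0\le\eta\le\norm{E_R}$. Choosing a unit top eigenvector $w$ of $S_R$, set $s=\sqrt{\eta/m}$, $p=sw$, and define $\nu=\mu|_{B_R}+m\,\delta_{p}$, a probability measure supported in the compact set $B_R\cup\{p\}$. Its frame matrix is $S_\nu=S_R+m s^{2}\,ww^{T}=S_R+\eta\,ww^{T}$. Completing $w$ to an orthonormal eigenbasis of $S_R$, each basis vector is again an eigenvector of $S_\nu$: the eigenvalue in the direction $w$ is lifted from $\lambda_{\max}(S_R)$ to $\lambda_{\max}(S_R)+\eta=B$, while the others are unchanged and $\le\lambda_{\max}(S_R)\le B$. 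Hence $\lambda_{\max}(S_\nu)=B$ exactly, i.e. $B'=B$, and $S_\nu\ge S_R$ gives $\lambda_{\min}(S_\nu)\ge\lambda_{\min}(S_R)\ge A-\norm{E_R}$. Taking $R$ large enough that $\norm{E_R}<\min(A,\epsilon)$ makes $S_\nu$ positive definite, so $\nu$ is a probabilistic frame, and ensures $A'\ge A-\epsilon$.

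For (a) I would use the explicit coupling in $\Gamma(\mu,\nu)$ that leaves $\mu|_{B_R}$ in place and sends the whole tail $\mu|_{\RR^d\setminus B_R}$ to the atom $p$; this gives
$$W_2^2(\mu,\nu)\le\int_{\RR^d\setminus B_R}\norm{y-p}^2\,d\mu(y)\le 2\int_{\norm{y}>R}\norm{y}^2\,d\mu(y)+2m s^2=2\,\tr(E_R)+2\eta\le 4\,\tr(E_R).$$
Since $\tr(E_R)\to 0$, choosing $R$ so large that $4\,\tr(E_R)<\epsilon$ and $\norm{E_R}<\min(A,\epsilon)$ simultaneously yields both (a) and (b).

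The one genuinely delicate requirement is the \emph{exact} equality $B'=B$ together with the smallness of $W_2(\mu,\nu)$: merely dumping the tail mass at the origin guarantees only $B'\le B$, and lifting the top eigenvalue back to $B$ costs transport. The key that makes both happen at once is to place the compensating atom along the top eigenvector $w$ of the \emph{truncated} matrix $S_R$ rather than that of $S_\mu$; then the mass-times-squared-radius needed to raise $\lambda_{\max}$ from $\lambda_{\max}(S_R)$ up to $B$ is exactly $m s^2=\eta\le\norm{E_R}$, so the additional transport cost $2\eta$ is of the same vanishing order as the truncation error $\tr(E_R)$.
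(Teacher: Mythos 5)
Your proof is correct, but it takes a genuinely different route from the paper's. The paper's construction is the simplest possible one: it truncates $\mu$ to a ball $B(0,R_1)$ with small tail second moment and dumps \emph{all} the escaping mass at the origin, $\nu = \mu|_{B(0,R_1)} + \mu(\RR^d\setminus B(0,R_1))\,\delta_0$. Since the atom at $0$ contributes nothing to $\int \absip{x}{y}^2\,d\nu(y)$, the frame operator of $\nu$ is dominated by $S_\mu$, so the \emph{same constant} $B$ remains an admissible upper frame bound (this is all the paper means by ``$B'=B$''), while the lower bound loses at most $\epsilon$; the coupling is the same one you use, with $p$ replaced by $0$. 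You instead read ``$B'=B$'' as exact preservation of the \emph{optimal} upper bound $\lambda_{\max}(S_\mu)$, and you achieve this stronger statement by placing the compensating atom at $p=\sqrt{\eta/m}\,w$ along the top eigenvector $w$ of the truncated matrix $S_R$, with $\eta = B-\lambda_{\max}(S_R)\le \norm{E_R}$ controlled by Weyl's inequality so that the extra transport cost $2\eta$ vanishes at the same rate as the truncation error. Both arguments are sound; the paper's buys simplicity (and is all that is needed downstream in Theorem~2.9, where only admissible bounds within $\epsilon$ of $A$ and $B$ are required), while yours buys the sharper conclusion that the optimal upper bound is reproduced exactly. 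One caveat on yours: the estimate $\eta\le\norm{E_R}$ genuinely requires $B=\lambda_{\max}(S_\mu)$; if $B$ were merely an admissible (non-optimal) bound, $\eta$ would stay bounded away from $0$ and your atom would carry non-vanishing cost --- in that case the correct move is precisely the paper's degenerate choice $\eta=0$, i.e.\ the atom at the origin.
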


\begin{proof}
\begin{enumerate}
\item[(a)] Let $\mu$ be a probabilistic frame with frame bound $A$ and $B$. Given $\epsilon>0$, there exists  $R_1>0$ such that $$\int_{\RR^{d}\setminus B(0, R_1)} \|x\|^{2}d\mu(x) < \epsilon.$$

Let $\nu$ be the measure defined for each Borel set $A\subset \RR^d$ by  $$\nu(A) = \mu(A\bigcap B(0,R_1)+ \mu(\RR^d\setminus B(0,R_1))\delta_0.$$  Clearly,  $\nu$ is a probabilistic measure with compact support.

 We consider the marginal $\gamma$ of $\mu$ and $\nu$ defined for each Borel sets $A, B\subset \RR^d$   by
 \begin{equation*}
\gamma(A\times B)= \left\{ \begin{array} {r@{\quad {\textrm if} \quad}l}
\mu(A\bigcap B(0,R_1) \bigcap B) + \mu(A\bigcap B^{c}(0,R_1) & 0\in B\\
\mu(A\bigcap B(0,R_1) \bigcap B) & 0\not\in B \end{array}\right.
\end{equation*}
Since $\nu $ is supported in $B(0,R_1)$
 \begin{align*}
 \iint_{\RR^{2d}}\|x-y\|^{2}d\gamma(x,y) &= \iint_{\RR^d \times B(0,R_1)}\|x-y\|^{2}d\gamma(x,y)\\
 &=\iint_{B(0,R_1) \times B(0,R_1)}\|x-y\|^{2}d\gamma(x,y)  \\
 &+ \iint_{B^{c}(0,R_1)\times B(0,R_1)}\|x-y\|^{2}d\gamma(x,y).
 \end{align*}
 However,  we know
$$\int_{B(0,R_1)\times B(0,R_1)}\|x-y\|^{2}d\gamma(x,y)=0$$ since, when restricted to $B(0,R_1)\times B(0,R_1)$, $\gamma$ is supported only on the diagonal where $\|x-y\| = 0.$ Moreover,
\begin{align*}
\int_{B^{c}(0,R_1)\times B(0,R_1)}\|x-y\|^{2}d\gamma(x,y) &=\iint_{B^{c}(0,R_1) \times B(0,R_1)\setminus \{0\}}\|x-y\|^{2}d\gamma(x,y)\\
& + \iint_{B^{c}(0,R_1)\times \{0\}}\|x-y\|^{2}d\gamma(x,y)\\
&=0+\iint_{B^{c}(0,R_1)\times \{0\}}\|x-y\|^{2}d\gamma(x,y)\\
&<\epsilon.
\end{align*}
Therefore, $W_2^2(\mu, \nu)<\epsilon.$

 \item[(b)] The upper bound $B$ is obtained trivially as $\nu$ is $\mu$ restricted to $B(0,R_1)$.

For  $x\in \RR^d$ we have $\int|\left\langle  x,y\right\rangle|^{2}d\nu(y) = \int_{B(0,R_1)}|\left\langle  x,y\right\rangle|^{2}d\mu(y).$ From the fact that $\int_{\RR^d \setminus B(0,R_1)}\|x\|^2d\mu(x) \leq \epsilon$ it follows that $$\int_{\RR^d \setminus B(0,R_1)}|\left\langle  x,y\right\rangle|^{2}d\mu(y)\leq \|x\|^2\epsilon.$$
\end{enumerate}
\end{proof}

Suppose that $\mu$ is a probabilistic frame supported in a ball $B(0, R)$. Let $r>0$ and consider $Q=[0, r)^d$. Choose points $\{c_k\}_{k=1}^M\subset \RR^d$ with $c_1=0$ such that $B(0, R)=\cup_{k=0}^M Q_k$ where $Q_k=c_k+Q$. Observe that $Q_k\cap Q_\ell=\emptyset$ whenever $k\neq \ell$. Let $\mu_{1,Q}=\sum_{k=1}^M\mu(Q_k)\delta_{c_k}$.

Next partition each cube $Q_k$ uniformly into cube of size $r/2$ and construct the probability measure $\mu_{2,Q}$ as above. Iterate this process to construct a sequence of probability measures $\mu_{n, Q}$.

\begin{lemma}\label{prop-3} Let $\mu$ be a probabilistic frame with bounds $A$ and $B$, which supported in a ball $B(0, R)$. For  $r>0$  let $\{\mu_{n,Q}\}_{n=1}^{\infty}$ be a sequence of probability measures as constructed above. Then, $$lim_{n\to \infty}W_2(\mu, \mu_{n,Q})=0.$$ Furthermore, there exists $N$ such that for all $n\geq N$, $\mu_{n, Q}$ is a finite probabilistic
frame whose bounds are arbitrarily close to those of $\mu$.
\end{lemma}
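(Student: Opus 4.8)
The plan is to realize each $\mu_{n,Q}$ as a pushforward of $\mu$ under an explicit quantization map, and then to control the transport cost and the second-moment matrix by the same small parameter. At the $n$-th stage the ball $B(0,R)$ is tiled by cubes of side $r_n:=r/2^{n-1}$, and $\mu_{n,Q}$ is obtained by collapsing all the $\mu$-mass inside each cube to the distinguished corner of that cube. Let $T_n\colon B(0,R)\to\RR^d$ be the map sending a point to the corner of the cube containing it. Then $\mu_{n,Q}=(T_n)_\#\mu$, and since a point and the corner of its cube lie in the same cube of side $r_n$ we have the pointwise bound $\nm{T_n(x)-x}\le\sqrt{d}\,r_n=:\delta_n$, where $\delta_n\to 0$.

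For the limit statement I would use the coupling $\gamma_n:=(\mathrm{id},T_n)_\#\mu\in\Gamma(\mu,\mu_{n,Q})$. By the definition of $W_2$ and the displacement bound,
$$W_2^2(\mu,\mu_{n,Q})\le\int_{\RR^d}\nm{x-T_n(x)}^2\,d\mu(x)\le\delta_n^2,$$
so $W_2(\mu,\mu_{n,Q})\le\delta_n\to 0$, which gives $\lim_{n\to\infty}W_2(\mu,\mu_{n,Q})=0$.

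For the bounds, let $S=\int yy^{T}\,d\mu(y)$ be the frame operator of $\mu$ and $S_n=\int yy^{T}\,d\mu_{n,Q}(y)$ that of $\mu_{n,Q}$. Since $\mu_{n,Q}=(T_n)_\#\mu$, the change of variables formula gives $S_n=\int T_n(x)T_n(x)^{T}\,d\mu(x)$. Writing $T_n(x)=x+e_n(x)$ with $\nm{e_n(x)}\le\delta_n$ and expanding $T_n(x)T_n(x)^{T}-xx^{T}=xe_n(x)^{T}+e_n(x)x^{T}+e_n(x)e_n(x)^{T}$, the support bound $\nm{x}\le R$ yields
$$\nm{S_n-S}\le\int_{\RR^d}\bigl(2\nm{x}\,\nm{e_n(x)}+\nm{e_n(x)}^2\bigr)\,d\mu(x)\le 2R\delta_n+\delta_n^2,$$
so $S_n\to S$ in operator norm. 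The optimal lower and upper frame bounds of $\mu_{n,Q}$ are precisely the smallest and largest eigenvalues of $S_n$, and Weyl's perturbation inequality gives $|\lambda_k(S_n)-\lambda_k(S)|\le\nm{S_n-S}$ for each $k$. Because $\mu$ is a frame, $S$ is positive definite; hence for $n$ large enough $S_n$ is positive definite, so $\mu_{n,Q}$ is a genuine finite probabilistic frame (its finitely many atoms span $\RR^d$), and its optimal bounds converge to those of $\mu$ as $n\to\infty$.

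The routine pieces are the transport-cost estimate and the eigenvalue continuity (standard via Weyl). The step I expect to be the crux is the operator-norm bound $\nm{S_n-S}\le 2R\delta_n+\delta_n^2$: this is exactly where compact support is indispensable, since the cross term $\int\nm{x}\,\nm{e_n(x)}\,d\mu(x)$ is controlled only through the uniform bound $\nm{x}\le R$. Without compactly supported $\mu$ the second moments could fail to converge even though the displacements $e_n(x)$ are uniformly small, so the reduction to the compactly supported case in Lemma~\ref{prop-2} is what makes this estimate—and hence the control of the frame bounds—possible.
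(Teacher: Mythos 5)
Your proposal is correct and follows essentially the same route as the paper: your coupling $(\mathrm{id},T_n)_\#\mu$ is exactly the paper's coupling $\sum_k \mu|_{Q_k}\times\delta_{c_k}$, and your operator-norm bound $\nm{S_n-S}\le 2R\delta_n+\delta_n^2$ is the paper's pointwise quadratic-form estimate $\bigl|\int\ip{x}{y}^2d\mu-\int\ip{x}{y}^2d\mu_{n,Q}\bigr|\le(d_n^2+2d_n(R+d_n))\nm{x}^2$ rewritten in matrix language. The only cosmetic difference is that you pass through Weyl's eigenvalue inequality, where the paper reads off the frame bounds directly from the perturbed quadratic form.
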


\begin{proof}
Let $d=\max_{x\in Q_k}\|x-c_k\|$.  Given, $x\in Q_k$, $x=c_k + a_k$, where $\|a_k\|\leq d$.

For any $x\in \RR^d$,
\begin{align*}
\bigg|\int_{B(0, R)}\ip{x}{y}^2d\mu(y)-\sum_{k=1}^M\ip{x}{c_k}^2\mu(Q_k)\bigg|&= \bigg|\sum_{k=1}^M\int_{Q_k}\ip{x}{y}^2 d\mu(y)-\sum_{k=1}^M\ip{x}{c_k}^2\mu(Q_k)\bigg|\\
&=\bigg|\sum_{k=1}^M\int_{Q_k}(\ip{x}{y}^2-\ip{x}{c_k}^2)d\mu(y)\bigg|\\
&\leq \sum_{k=1}^M\int_{Q_k}\big|\ip{x}{y}^2-\ip{x}{c_k}^2\big|d\mu(y)\\
&=\sum_{k=1}^M\int_{Q_k}|\ip{x}{c_k+a_k}^2-\ip{x}{c_k}^2|d\mu(y)\\
&=\sum_{k=1}^M\int_{Q_k}|\ip{x}{a_k}^2+2\ip{x}{c_k}\ip{x}{a_k}|d\mu(y)\\
&\leq \|x\|^2\sum_{k=1}^M\mu(Q_k)(\|a_k\|^2+2\|c_k\|\|a_k\|)\\
&\leq (d^2+2d(R+d))\|x\|^2.
\end{align*}

Note that by the iterative construction of $\mu_{n, Q}$ we get that for each $x\in \RR^d$ $$\bigg|\int_{\RR^{d}} \ip{x}{y}^2d\mu(y)-\int_{\RR^{d}} \ip{x}{y}^2d\mu_{n, Q}(y)\bigg|\leq (d_{n}^2+2d_{n}(R+d_{n}))\|x\|^2$$ where $\lim_{n\to \infty}d_n=0$. It follows that given $\epsilon>0$, we can find $N>1$ such that for all $n\geq N$,

$$\int_{\RR^{d}} \ip{x}{y}^2d\mu_{n, Q}(y)>\int_{\RR^{d}} \ip{x}{y}^2d\mu(y)-\epsilon\|x\|^2>\|x\|^2(A-\epsilon)$$ which concludes that $\mu_{n, Q}$ is a a finite probabilistic frame whose lower bound is at least $A-\epsilon$. Furthermore, $$\int_{\RR^{d}} \ip{x}{y}^2d\mu_{n, Q}(y)<\int_{\RR^{d}} \ip{x}{y}^2d\mu(y)+\epsilon\|x\|^2\leq \|x\|^2(B+\epsilon)$$ which implies that the upper frame bound  $\mu_{n, Q}$ is at most $B+\epsilon$.

Next, fix $n\geq N$ and let $\gamma_n(x,y)$ be the measure on $\RR^d\times \RR^d$ be defined  for any Borel sets  $ A, B \subset \RR^d$  by:

$$\gamma_n(A\times B)=\sum_{k: c_k\in B}\mu(A\cap Q_k)=\sum_{k=1}^M\mu_{|_{Q_k}}\times \delta_{c_{k}}(A\times B)$$ where $A, B$  $c_k$ denoting the centers of the cubes $Q_{k}$. It is easy to see  that  $\gamma_n \in \Gamma(\mu, \mu_{n, Q})$
and so
\begin{align*}
W_2^2(\mu, \mu_{n, Q})&\leq \iint\|x-y\|^2d\gamma_n(x, y)\\
&=\sum_{k=1}^{M}\iint \|x-y\|^2d(\mu_{|_{Q_{k}}}\times \delta_{c_{k}})(x,y)\\
&=\sum_{k=1}^M \int_{Q_k}\|x-c_k\|^2d\mu(x)\\
&\leq \sum_{k=1}^M\mu(Q_k)\int_{Q_k}d_n^2d\mu(x)\\
&\leq d_n^2
\end{align*}
and the result follows from the fact that $\lim_{n\to \infty}d_n=0$.

\end{proof}

We can now give a proof  of  Theorem~\ref{density-dpf}.
\begin{proof}[Proof of Theorem~\ref{density-dpf}]
Let $\mu$ be a  probabilistic frame with frame bounds $A$ and $B$, and $\epsilon > 0$.  By Lemma~\ref{prop-2} let  $\nu$ be a compactly supported probabilistic frame with  frame bounds between $A-\epsilon/2$ and $B$ and such that $W_2(\mu, \nu)< \epsilon/2$.

By Lemma~\ref{prop-3}  we know there exists a finite probabilistic frame $\mu_{\Phi, w}$ whose frame bounds are within $\epsilon/2$ of that of $\nu$ and such that $W_2(\nu, \mu_{\Phi, w})< \epsilon/2 $.  Consequently, $W_2(\mu, \mu_{\Phi, w})<\epsilon$ which concludes the proof.
\end{proof}

\begin{cor}\label{appro-prob}
Let $\mu$ be a probabilistic Parseval frame and $\epsilon > 0.$ Then, there exists a finite Parseval probabilistic frame $\mu_{\Phi, w}$ with $$W_{2}(\mu, \mu_{\Phi, w})< \epsilon.$$
\end{cor}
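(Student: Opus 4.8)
The plan is to combine the two main technical results already in hand: the density statement Theorem~\ref{density-dpf}, which approximates $\mu$ by a finite probabilistic frame whose bounds are forced to be nearly $1$, and the explicit distance estimate of Proposition~\ref{dists-to-parseval}, which controls how far such a finite frame sits from its own canonical Parseval frame purely in terms of its bounds. The point is that neither result alone yields a nearby \emph{Parseval} finite frame: Theorem~\ref{density-dpf} only produces a finite probabilistic frame, and projecting it onto the Parseval frames displaces it, so I must show this displacement is small. The object I will exhibit is $\mu_{\Phi^{\dag}, w}$, the canonical Parseval frame of the approximant, and I will reach $\mu$ through the triangle inequality for $W_2$.

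Concretely, since $\mu$ is Parseval its bounds are $A=B=1$. Fix $\epsilon>0$ and let $\delta\in(0,1)$ be a parameter chosen at the end. First I apply Theorem~\ref{density-dpf} with tolerance $\delta$ to obtain a finite probabilistic frame $\mu_{\Phi, w}=\sum_i w_i\delta_{\varphi_i}$ with bounds $A', B'$ satisfying $A'\geq 1-\delta$ and $B'\leq 1+\delta$ and with $W_2(\mu,\mu_{\Phi, w})<\delta$. Next I apply Proposition~\ref{dists-to-parseval} to $\mu_{\Phi, w}$: its canonical Parseval frame $\mu_{\Phi^{\dag}, w}$, with $\Phi^{\dag}=\{S^{-1/2}\varphi_i\}$, is a finite Parseval probabilistic frame and satisfies
\[
W_2(\mu_{\Phi, w},\mu_{\Phi^{\dag}, w})\le \sqrt{d\,\max\bigl((\sqrt{A'}-1)^2,(\sqrt{B'}-1)^2\bigr)}.
\]
Because $1-\delta\le A'\le B'\le 1+\delta$, both $\sqrt{A'}$ and $\sqrt{B'}$ lie in $[\sqrt{1-\delta},\sqrt{1+\delta}]$, and convexity of $(t-1)^2$ bounds the maximum above by $\max\bigl((1-\sqrt{1-\delta})^2,(\sqrt{1+\delta}-1)^2\bigr)$, which tends to $0$ as $\delta\to 0$. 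The triangle inequality then gives
\[
W_2(\mu,\mu_{\Phi^{\dag}, w})\le W_2(\mu,\mu_{\Phi, w})+W_2(\mu_{\Phi, w},\mu_{\Phi^{\dag}, w})<\delta+\sqrt{d\,\max\bigl((1-\sqrt{1-\delta})^2,(\sqrt{1+\delta}-1)^2\bigr)},
\]
and choosing $\delta$ small enough forces the right-hand side below $\epsilon$.

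There is no genuine obstacle here, since all the machinery is in place; the only point needing care is the bookkeeping of the frame bounds. I must check that the output of Theorem~\ref{density-dpf} is of the weighted form $\mu_{\Phi, w}$ demanded by Proposition~\ref{dists-to-parseval} — it is, by the construction of Lemma~\ref{prop-3}, whose approximants are $\sum_k\mu(Q_k)\delta_{c_k}$ — and that the correction term depends continuously on $A',B'$ and vanishes as these bounds approach $1$. Since $\mu_{\Phi^{\dag}, w}$ is by construction a finite Parseval probabilistic frame, this completes the argument.
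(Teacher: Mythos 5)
Your proof is correct and follows exactly the route the paper intends: the paper's own proof of this corollary is the one-line remark that it ``follows from Proposition~\ref{dists-to-parseval} and Theorem~\ref{density-dpf},'' and your argument is precisely the fleshed-out version of that combination --- approximate the Parseval frame $\mu$ by a finite probabilistic frame with bounds near $1$, pass to its canonical Parseval frame via the distance estimate, and conclude by the triangle inequality. Nothing to add.
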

\begin{proof}
This follows from Proposition~\ref{dists-to-parseval} and Theorem~\ref{density-dpf}.
\end{proof}
\begin{remark}
Since the set of finite Parseval frames is dense in the set of all Parseval frames in the Wasserstein metric, by Proposition 2.6 since there is no finite Parseval frame closer to $\Phi$  than $\Phi^\dag=\{S^{-1/2}\varphi_i\}_{i=1}^N$, there are no Parseval frame closer to $\Phi$  than $\Phi^\dag$.
\end{remark}

\subsection{The closest  Parseval frame  in the $2-$Wasserstein distance}\label{subsec2.3}

In this section we prove and state of our main result, Theorem~\ref{maintheorem}. We recall that if $\mu$ is a probabilistic frame for $\RR^d$, then its  probabilistic frame operator (equivalently, the matrix of second moments associated to $\mu$)

$$S_\mu:\RR^d\rightarrow \RR^d,\qquad S_\mu (x) = \int_{\RR^d} \ip{ x}{y} y d\mu(y)$$ is positive definite, and thus $S_\mu^{-1/2}$ exists. We define the push-forward of $\mu$ through $S_{\mu}^{-1/2}$  by $$\mu^{\dagger} (B) =\mu(S^{1/2} B)$$ for each Borel set in $\RR^d$. Alternatively, if $f$ is a continuous bounded function on $\RR^d$, $$\int_{\RR^d} f(y)d \mu^{\dag}(y) = \int_{\RR^d} f(S_{\mu}^{-1/2}y) d\mu(y).$$

It then follows that
$$
x=S_{\mu}^{-1/2}S_{\mu} S_{\mu}^{-1/2}(x)=   \int_{\RR^d}   \ip{S_{\mu}^{-1/2} x}{y} \, S_{\mu}^{-1/2}y \, d\mu(y)=   \int_{\RR^d} \ip{x}{ y}\, y \, d\mu^{\dagger} (y) $$ implying that $\mu^{\dag}$ is a Parseval probabilistic frame \cite{EhlOko2013, KOPrecondPF2016}. In particular, $S_{\mu^{\dag}}=I$ where $I$ is the identity matrix on $\RR^d$. As was the case with the canonical Parseval frame $\Phi^{\dag}$ of a given frame $\Phi$, $\mu^{\dag}$ is the (unique) closest  Parseval probabilistic frame to $\mu$.

\begin{theorem}\label{maintheorem}
Let $\mu$ be a probabilistic frame on $\RR^d$ with probabilistic frame operator $S_\mu$. Then $\mu^{\dag}$ is the (unique) closest probabilistic Parseval frame to $\mu$ in the $2-$Wasserstein metric, that is
\begin{equation}\label{optimum}
\mu^{\dag}=\textrm{arg} \min  W_{2}^{2}(\mu, \nu)
\end{equation}
where $\nu $ ranges over all Parseval probabilistic frames.
\end{theorem}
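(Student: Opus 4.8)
The plan is to recast the problem as a single constrained optimal-transport problem and to solve it by exhibiting an explicit matrix-valued dual certificate; this yields both optimality and uniqueness at once. First I would record that $\mu^\dag$ is Parseval, so $S_{\mu^\dag}=I$ (already established above), and observe that every Parseval probabilistic frame $\nu$ has the same second moment $M_2^2(\nu)=\tr S_\nu=\tr I=d$. Expanding $\|x-y\|^2$ against an optimal coupling $\gamma\in\Gamma(\mu,\nu)$ gives $W_2^2(\mu,\nu)=\tr S_\mu+d-2\int \ip{x}{y}\,d\gamma(x,y)$, so minimizing $W_2^2(\mu,\nu)$ over Parseval $\nu$ is the same as maximizing the bilinear functional $\int\ip{x}{y}\,d\gamma$ over all $\gamma\in\mathcal P_2(\RR^d\times\RR^d)$ with first marginal $\mu$ and subject to the second-moment constraint $\int yy^{\top}\,d\gamma=I$ (which is exactly the condition that the second marginal be Parseval).

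The decisive step is to choose the constant symmetric multiplier $\Lambda=\tfrac12 S_\mu^{1/2}\succ0$. For any feasible $\gamma$ I would write
\[
\int\ip{x}{y}\,d\gamma=\int\big(\ip{x}{y}-\ip{\Lambda y}{y}\big)\,d\gamma+\tr\Lambda,
\]
using $\int\ip{\Lambda y}{y}\,d\gamma=\tr\big(\Lambda\int yy^{\top}d\gamma\big)=\tr\Lambda$, a quantity constant over all feasible $\gamma$. Since $\Lambda\succ0$, the map $z\mapsto\ip{x}{z}-\ip{\Lambda z}{z}$ is strictly concave with unique maximizer $z=\tfrac12\Lambda^{-1}x=S_\mu^{-1/2}x$ and maximal value $\tfrac12\ip{x}{S_\mu^{-1/2}x}$. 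Integrating this pointwise bound and using $\int\ip{x}{S_\mu^{-1/2}x}\,d\mu=\tr S_\mu^{1/2}$ gives $\int\ip{x}{y}\,d\gamma\le\tr S_\mu^{1/2}$ for every feasible $\gamma$.

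I would then verify that the graph coupling $\gamma^\ast=(\mathrm{id}\times S_\mu^{-1/2})_\ast\mu$ is feasible, that its second marginal is precisely $\mu^\dag$, and that it attains the bound, $\int\ip{x}{y}\,d\gamma^\ast=\tr S_\mu^{1/2}$; this proves $\mu^\dag$ is a minimizer, with minimal value $\tr S_\mu+d-2\tr S_\mu^{1/2}=\tr\big((S_\mu^{1/2}-I)^2\big)=\sum_k(\sqrt{\lambda_k}-1)^2$, where the $\lambda_k$ are the eigenvalues of $S_\mu$, consistent with the finite computation in Proposition~\ref{dists-to-parseval}. Uniqueness is then immediate: strict concavity forces the pointwise inequality to become an equality only when $y=S_\mu^{-1/2}x$, so any optimal $\gamma$ is supported on the graph of $S_\mu^{-1/2}$, whence $\gamma=\gamma^\ast$ and its second marginal $\nu$ must equal $\mu^\dag$.

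The main obstacle I anticipate is the careful justification of the reduction in the first paragraph, namely that the relaxation replacing ``the second marginal is some Parseval measure'' by the linear constraint $\int yy^{\top}d\gamma=I$ neither enlarges nor shrinks the infimum, together with the measurability and finiteness checks (all integrals are finite because $\mu,\nu\in\mathcal P_2$, and an optimal coupling exists by the remark following~\eqref{wmetric}). I note that the optimality half alone could instead be obtained along the paper's approximation route: approximate $\mu$ in $W_2$ by finite probabilistic frames $\mu_n$ with controlled bounds (Theorem~\ref{density-dpf}), apply the finite-case optimality of $\mu_n^\dag$ (Proposition~\ref{dists-to-parseval} and the remark after Corollary~\ref{appro-prob}), and pass to the limit via $S_{\mu_n}\to S_\mu$ and the continuity estimates of Theorem~\ref{cont-F} to conclude $\mu_n^\dag\to\mu^\dag$; but it is the strict-concavity certificate that delivers uniqueness cleanly.
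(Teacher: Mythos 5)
Your proof is correct, and it takes a genuinely different route from the paper's. The paper uses no duality at all: it proves the discrete case first (Theorem~\ref{uniqueness_prob_1}, Proposition~\ref{dists-to-parseval}), approximates a general probabilistic frame in $W_2$ by finite ones with controlled frame bounds (Theorem~\ref{density-dpf}), establishes uniform continuity of $\Phi\mapsto\Phi^\dag$ on $\fr_{A,B}$ and its Wasserstein analogue (Theorems~\ref{cont-F} and~\ref{continuityFPF}), uses these to define $F(\mu)$ as a limit of canonical Parseval frames of approximating finite frames and to show $F(\mu)$ is the unique minimizer by a three-case contradiction argument (Theorem~\ref{optimizer1}, via Lemma~\ref{contradiction-unifcont}), and only then identifies $F(\mu)=\mu^\dag$ by testing against continuous functions of quadratic growth. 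Your argument collapses all of this machinery into one constrained transport problem: a probability measure $\nu$ is a Parseval probabilistic frame precisely when $\int yy^{T}\,d\nu=I$ (positive definiteness of $I$ forces the support to span $\RR^d$, and $\tr I=d<\infty$ gives the second moment), so the ``relaxation'' you flag as the main obstacle is in fact an exact reformulation needing only a two-line check; the multiplier $\Lambda=\tfrac12 S_\mu^{1/2}$ then does everything at once, since strict concavity of $z\mapsto\ip{x}{z}-\ip{\Lambda z}{z}$ gives the upper bound $\int\ip{x}{y}\,d\gamma\le\tr S_\mu^{1/2}$, the graph coupling attains it, and the equality case forces any optimal plan onto the graph of $S_\mu^{-1/2}$, yielding uniqueness of both the minimizer and the optimal coupling (all integrals converge because both marginals lie in $\mathcal{P}_2$). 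What each approach buys: yours is short, self-contained, avoids every frame-bound and continuity estimate, and produces the closed-form minimum $\tr\bigl((S_\mu^{1/2}-I)^2\bigr)=\sum_k(\sqrt{\lambda_k}-1)^2$ (which, incidentally, corrects the exponent misprint $\sum_k(1-\lambda_k^{-1/2})^2$ at the end of the paper's proof of Theorem~\ref{uniqueness_prob_1}); the paper's longer route, by contrast, develops intermediate results of independent interest --- the controlled-bound density theorem and the uniform continuity and extension of the canonical-Parseval map $F$ --- which the authors list among their contributions but which your argument simply does not need.
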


Before proving this theorem, we need to establish a few preliminary results. We start by extending Theorem~\ref{cont-F} to finite probabilistic frames in the Wasserstein metric. In particular, this extension allows use to deal with finite probabilistic frames of different cardinalities.

\begin{theorem}\label{continuityFPF}
Let $0<A\leq B <\infty$, and  $\delta > 0$ be given. Then there  exists  $\epsilon > 0$ such that given any finite probabilistic frame $\mu_{\Phi, w}=\sum_{i=1}^Nw_i\delta_{\varphi_i}$ with   frame bounds between $A$ and $B$, $N:=N_{\Phi}\geq 2$,  $\Phi=\{\varphi_i\}_{i=1}^N\subset \RR^d$, and weights $w=\{w_i\}_{i=1}^N \subset [0, \infty)$, for any  finite probabilistic frame $\mu_{\Psi, \eta}=\sum_{i=1}^M\eta_i\delta_{\psi_i},$ $M:=M_{\Psi}\geq 2$, where $\Psi=\{\psi_i\}_{i=1}^M\subset \RR^d$, and weights $\eta=\{\eta_i\}_{i=1}^N \subset [0, \infty)$  if  $W_2(\mu_{\Phi, w}, \mu_{\Psi, \eta})< \epsilon,$  then we have $$W_2(F(\mu_{\Phi, w}), F(\mu_{\Psi, \eta})) < \delta.$$
\end{theorem}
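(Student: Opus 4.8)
The plan is to reduce to Theorem~\ref{cont-F} by \emph{splitting} both finite probabilistic frames along an optimal transport plan, which converts the Wasserstein comparison into a frame-metric comparison between two frames sharing a common index set. Fix $\delta>0$ and let $\epsilon>0$ be the constant furnished by Theorem~\ref{cont-F} for the given $A,B,\delta$. Suppose $\mu_{\Phi,w}=\sum_{i=1}^N w_i\delta_{\varphi_i}$ (with bounds in $[A,B]$) and $\mu_{\Psi,\eta}=\sum_{j=1}^M\eta_j\delta_{\psi_j}$ satisfy $W_2(\mu_{\Phi,w},\mu_{\Psi,\eta})<\epsilon$. Since both measures are finitely supported, the optimal coupling $\gamma_0\in\Gamma(\mu_{\Phi,w},\mu_{\Psi,\eta})$ is discrete, $\gamma_0=\sum_{i,j}w'_{ij}\delta_{(\varphi_i,\psi_j)}$ with $\sum_j w'_{ij}=w_i$ and $\sum_i w'_{ij}=\eta_j$, exactly as in the proof of Proposition~\ref{dists-to-parseval}, and $W_2^2(\mu_{\Phi,w},\mu_{\Psi,\eta})=\sum_{i,j}w'_{ij}\|\varphi_i-\psi_j\|^2$.

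Next I would split each frame along $\gamma_0$. Because $\sum_j (w'_{ij}/w_i)=1$ for each $i$, Lemma~\ref{frame_modi} shows that replacing $\sqrt{w_i}\varphi_i$ in $\Phi_w$ by the family $\{\sqrt{w'_{ij}}\varphi_i\}_j$ leaves the frame operator unchanged; performing this for every $i$ yields $\tilde\Phi=\{\sqrt{w'_{ij}}\varphi_i\}_{i,j}$ with $S_{\tilde\Phi}=S_{\mu_{\Phi,w}}$, whence $\tilde\Phi\in\fr_{A,B}$. The same construction applied to $\Psi_\eta$ (using $\sum_i w'_{ij}=\eta_j$) gives $\tilde\Psi=\{\sqrt{w'_{ij}}\psi_j\}_{i,j}$ with $S_{\tilde\Psi}=S_{\mu_{\Psi,\eta}}$. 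Both are indexed by the same set of pairs $(i,j)$ with $w'_{ij}>0$, so they have equal cardinality (at least $2$), and
\[ d(\tilde\Phi,\tilde\Psi)^2=\sum_{i,j}w'_{ij}\|\varphi_i-\psi_j\|^2=W_2^2(\mu_{\Phi,w},\mu_{\Psi,\eta})<\epsilon^2. \]
Theorem~\ref{cont-F} then gives $d(F(\tilde\Phi),F(\tilde\Psi))<\delta$, where $F(\tilde\Phi)=\{\sqrt{w'_{ij}}\,S_{\Phi}^{-1/2}\varphi_i\}_{i,j}$ and $F(\tilde\Psi)=\{\sqrt{w'_{ij}}\,S_{\Psi}^{-1/2}\psi_j\}_{i,j}$, since the frame operators of $\tilde\Phi,\tilde\Psi$ are $S_{\mu_{\Phi,w}},S_{\mu_{\Psi,\eta}}$.

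Finally I would transport this back to the Wasserstein distance between the canonical Parseval measures. Writing $\varphi_i^\dag=S_\Phi^{-1/2}\varphi_i$ and $\psi_j^\dag=S_\Psi^{-1/2}\psi_j$, the measure $\gamma^\dag=\sum_{i,j}w'_{ij}\delta_{(\varphi_i^\dag,\psi_j^\dag)}$ has marginals $F(\mu_{\Phi,w})=\sum_i w_i\delta_{\varphi_i^\dag}$ and $F(\mu_{\Psi,\eta})=\sum_j\eta_j\delta_{\psi_j^\dag}$, by the two marginal identities for $w'_{ij}$, so $\gamma^\dag$ is admissible. Therefore
\[ W_2^2\big(F(\mu_{\Phi,w}),F(\mu_{\Psi,\eta})\big)\leq\sum_{i,j}w'_{ij}\|\varphi_i^\dag-\psi_j^\dag\|^2=d\big(F(\tilde\Phi),F(\tilde\Psi)\big)^2<\delta^2, \]
which is the desired conclusion.

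The main obstacle is conceptual rather than computational: Theorem~\ref{cont-F} compares two frames over a \emph{common} index set in the frame metric, whereas here the two probabilistic frames carry different cardinalities $N$ and $M$ and are compared in $W_2$. The device that reconciles these is the splitting of Lemma~\ref{frame_modi} performed along an optimal plan: it simultaneously equalizes the index sets (re-indexing by pairs $(i,j)$), preserves both frame operators (hence the frame bounds and the canonical Parseval frames), and realizes $W_2$ exactly as the frame metric $d$. Two points require care: one must use the \emph{optimal} coupling, so that $d(\tilde\Phi,\tilde\Psi)=W_2<\epsilon$ rather than merely $\geq W_2$; and one must verify that the pushed-forward plan $\gamma^\dag$ remains admissible, which follows solely from the two marginal constraints and gives the upper bound $W_2\leq d$ on the Parseval side.
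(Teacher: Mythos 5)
Your proposal is correct and follows essentially the same route as the paper's own proof: take the optimal discrete coupling, split both frames along it via Lemma~\ref{frame_modi} into frames $\{\sqrt{w'_{ij}}\varphi_i\}_{i,j}$ and $\{\sqrt{w'_{ij}}\psi_j\}_{i,j}$ sharing an index set and the original frame operators, apply Theorem~\ref{cont-F}, and then bound $W_2$ of the canonical Parseval measures by the frame-metric distance using the pushed-forward coupling. Your write-up is, if anything, slightly more explicit than the paper's about why the pushed-forward plan $\gamma^\dag$ is admissible.
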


\begin{proof} Fix $\delta>0$.  By Theorem~\ref{cont-F} we know that there exists $\epsilon$ such that given a frame $X=\{x_i\}_{i=1}^M$ ($M\geq 2$ is arbitrary) with frame bounds between $A$ and $B$, and  $Y=\{y_{i}\}_{i=1}^M$   is a frame such that $$d(X, Y)=\sqrt{\sum_{i=1}^{M}\|x_{i} - y_{i}\|^2} <  \epsilon$$ then $$d(F(X), F(Y))=d(S^{-1/2}_{X}X, S^{-1/2}_{Y}Y) <  \delta.$$

Let $\mu_{\Phi, w}=\sum_{i=1}^N w_i\delta_{\varphi_i}$ be a finite probabilistic frame with   frame bounds between $A$ and $B$, $N \geq 2$,  $\Phi=\{\varphi_i\}_{i=1}^N\subset \RR^d$, and weights $w=\{w_i\}_{i=1}^N \subset [0, \infty)$. Then by Theorem~\ref{dists-to-parseval}, $\mu_{\Phi^{\dag}, w}$ where $\Phi^{\dag}=\{S^{-1/2}_{\Phi}\varphi_i\}_{i=1}^N$ is the closest Parseval frame to $\mu_{\Phi, w}$.

Let $\mu_{\Psi, v}$ where $\Psi=\{\psi_i\}_{i=1}^M$, $M\geq 2$ such that $W_2(\mu_{\Phi, w}, \mu_{\Psi, \eta})< \epsilon$. Choose $\gamma \in \Gamma(\mu_{\Phi, w}, \mu_{\Psi, v})$  such that
$$W_2(\mu_{\Phi, w}, \mu_{\Psi, \eta})^2=\iint_{\RR^d \times \RR^d}\|x-y\|^2d\gamma(x,y)< \epsilon^2.$$ Identify $\gamma$ with $\{w_{i,j}\}_{i, j=1}^{N, M}$. Then,

$$W_2(\mu_{\Phi, w}, \mu_{\Psi, \eta})^2=\iint_{\RR^d \times \RR^d}\|x-y\|^2d\gamma(x,y)=\sum_{i=1}^M\sum_{j=1}^Nw_{i,j}\|\varphi_i-\psi_j\|^2< \epsilon^2.$$

Observe that $\Phi'=\{\sqrt{w_{i,j}}\varphi_i\}_{i, j=1}^{M,N}$ is a frame whose frame bounds are the same as  those for $\mu_{\Phi,w}$. Similarly, $\Psi'=\{\sqrt{w_{i,j}}\psi_j\}_{i, j=1}^{M,N}$ is a frame whose frame bounds are the same as  those for $\mu_{\Psi,\eta}.$ Furthermore,

$$d(\Phi', \Psi')=W_2(\mu_{\Phi, w}, \mu_{\Psi, \eta})<\epsilon$$ which implies that

$$d(F(\Phi'), F(\Psi'))^2= \sum_{i, j=1}^{M,N}\|S^{-1/2}_{\Phi}(\sqrt{w_{i,j}}\varphi_i) - S^{-1/2}_{\Psi}(\sqrt{w_{i,j}}\psi_j)\|^2<  \delta^2.$$

However,
$$\sum_{i,j}\|S^{-1/2}_{\Phi}(\sqrt{w_{i,j}}\varphi_i) - S^{-1/2}_{\Psi}(\sqrt{w_{i,j}}\psi_j)\|^2 = \sum_{i,j}w_{i,j}\|S^{-1/2}_{\Phi}\varphi_i - S^{-1/2}_{\Psi}\psi_j\|^2$$

But since $w_{i,j} = \gamma(\{\varphi_i\}, \{\psi_j\})$ we have $\sum_{j}w_{i,j} = w_i$ and $\sum_{i}w_{i,j} = v_j$ we see that
$$W^2_2(F(\mu_{\Phi, w}), F(\mu_{\Psi, \eta}))=W_2^2(\mu_{\Phi^{\dag}, w}, \mu_{\Psi^{\dag}, v}) \leq \sum_{i,j}w_{i,j}\|S^{-1/2}_{\Phi}\varphi_i - S^{-1/2}_{\Psi}\psi_j\|^2.$$
\end{proof}

Let $DPF(A, B)$ denote the set of all discrete (finite)  probabilistic frames in $\RR^d$ whose lower frame bounds are less than or equal to $A$ and whose upper bounds are greater or equals to $B$. It follows from the above result that $F$ is uniformly continuous from $DPF(A, B)$ into itself when equipped with the Wasserstein metric. Consequently, we can prove the following result.

\begin{proposition}\label{well-defined-F}
Let $\mu$ be a probabilistic frame with frame bounds $A$ and $B$. Let $\mu_k:=\mu_{\Phi_{k}, w_k}$, where  $\Phi_{k}:=\Phi_{k, w_{k}}=\{\varphi_{k}\}_{k=1}^{N_k}$ and $\nu_k:=\mu_{\Psi_{k}, v_{k}}$, where $\Psi_{k}:=\Psi_{k, v_{k}}=\{\psi_{k}\}_{k=1}^{M_k}$ be two sequences of finite probabilistic frames in $\RR^d$ such that $\lim_{k\to \infty}W_{2}(\mu, \mu_{\Phi_{k}})=\lim_{k\to \infty}W_2(\mu, \mu_{\Psi_{k}})=0$. Furthermore, suppose that the frame bounds of $\mu_{\Phi_{k}}$ are between $A/2$ and $B+A/2$. Then $$\lim_{k\to \infty}F(\mu_{\Phi_{k}})=\lim_{k\to \infty}F(\mu_{\Psi_{k}}).$$
\end{proposition}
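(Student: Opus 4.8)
The plan is to prove that the limiting value of $F$ along any approximating sequence is independent of the sequence chosen, which is exactly the well-definedness needed to extend $F$ to the probabilistic frame $\mu$ by continuity. The natural strategy is to interlace the two sequences into a single sequence and exploit the uniform continuity of $F$ established in Theorem~\ref{continuityFPF}. First I would observe that both $\mu_{\Phi_k}$ and $\mu_{\Psi_k}$ converge to $\mu$ in $W_2$, and that $\mu$ has frame bounds $A$ and $B$. By the lower semicontinuity (respectively upper semicontinuity) of the frame bounds under $W_2$ convergence---which follows from the integral representation of the frame operator and the fact that $W_2$ convergence implies convergence of second moments---for $k$ large the bounds of $\mu_{\Psi_k}$ also lie in a fixed compact interval, say between $A/2$ and $B+A/2$, matching the hypothesis already imposed on $\mu_{\Phi_k}$. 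Thus eventually both sequences live inside $DPF(A/2, B+A/2)$, the set on which Theorem~\ref{continuityFPF} guarantees $F$ is uniformly continuous.

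Next I would form the interlaced sequence $\sigma_1=\mu_{\Phi_1}, \sigma_2=\nu_1=\mu_{\Psi_1}, \sigma_3=\mu_{\Phi_2}, \sigma_4=\mu_{\Psi_2}, \dots$, which still converges to $\mu$ in $W_2$ and (after discarding finitely many initial terms) lies in $DPF(A/2, B+A/2)$. Since $\{\sigma_n\}$ is $W_2$-convergent it is $W_2$-Cauchy, so given $\delta>0$ I can pick, via the $\epsilon$ supplied by Theorem~\ref{continuityFPF} for this $\delta$, an index beyond which all $\sigma_n$ are pairwise within $\epsilon$; uniform continuity then forces $W_2(F(\sigma_m),F(\sigma_n))<\delta$ for all large $m,n$. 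Hence $\{F(\sigma_n)\}$ is itself $W_2$-Cauchy, and in particular its two interlaced subsequences $\{F(\mu_{\Phi_k})\}$ and $\{F(\mu_{\Psi_k})\}$ have the same limit (once one knows the limits exist). This gives the desired equality
\[
\lim_{k\to\infty}F(\mu_{\Phi_k})=\lim_{k\to\infty}F(\mu_{\Psi_k}).
\]

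To guarantee the individual limits exist I would invoke completeness of the Wasserstein space $(\mathcal{P}_2(\RR^d), W_2)$: the Cauchy sequence $\{F(\sigma_n)\}$ converges to some $\rho\in\mathcal{P}_2$, and every subsequence then converges to that same $\rho$. The main obstacle I anticipate is the semicontinuity argument controlling the frame bounds of $\mu_{\Psi_k}$: one must ensure that $W_2$ convergence transfers the uniform two-sided frame bound control to the $\Psi$-sequence so that both sequences genuinely sit in a common $DPF(A',B')$ where Theorem~\ref{continuityFPF} applies. This requires showing that $\mu_k \to \mu$ in $W_2$ implies $\int \langle x,y\rangle^2\,d\mu_k(y) \to \int \langle x,y\rangle^2\,d\mu(y)$ uniformly enough on the unit sphere, which hinges on the convergence of second moments that accompanies $W_2$ convergence; once this quantitative control is in place, the uniform continuity machinery does the rest essentially formally.
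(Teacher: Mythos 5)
Your proposal is correct, and it rests on the same key lemma as the paper---the uniform continuity of $F$ on finite probabilistic frames with controlled bounds (Theorem~\ref{continuityFPF})---but it is organized differently in two respects. The paper's proof is shorter because it exploits the asymmetry of Theorem~\ref{continuityFPF}: the frame-bound hypothesis there is imposed only on the \emph{first} measure, while the second may be an arbitrary finite probabilistic frame. So the paper simply fixes $\delta$, takes the $\epsilon$ furnished by that theorem for the bounds $A/2$ and $B+A/2$ (which the $\mu_{\Phi_k}$ satisfy by hypothesis), notes that $W_2(\mu_{\Phi_k},\mu_{\Psi_k})<\epsilon$ for large $k$ by the triangle inequality through $\mu$, and concludes $W_2(F(\mu_{\Phi_k}),F(\mu_{\Psi_k}))<\delta$; no control whatsoever on the bounds of $\mu_{\Psi_k}$ is needed, which is exactly why the proposition does not assume any. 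Your semicontinuity step---transferring the bounds to $\mu_{\Psi_k}$ via convergence of the second-moment matrices under $W_2$---is therefore extra work, though it is valid: it amounts to the estimate $\|S_{\mu}-S_{\mu_n}\|\leq C\,W_2(\mu_n,\mu)$ that the paper quotes from \cite{WCKO17} immediately after this proposition. What your route buys in exchange is a cleaner treatment of a point the paper leaves implicit: the paper only shows $W_2(F(\mu_{\Phi_k}),F(\mu_{\Psi_k}))\to 0$ and then asserts that the equality of the limits ``easily follows,'' whereas your interlacing-plus-Cauchy argument, combined with the completeness of $(\mathcal{P}_2,W_2)$, actually establishes that both limits exist and coincide rather than presupposing their existence.
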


\begin{proof}
Theorem~\ref{density-dpf} ensures the existence of the finite probabilistic frames $\mu_{\Phi_{k}}$.

Let $\delta>0$ be given. By Theorem~\ref{continuityFPF} there exists $\epsilon >0$ such that for any finite probabilistic frame $\nu$ and any $k\geq 1$, $$W_2(\mu_{\Phi_{k}}, \nu)< \epsilon \implies W_2(F(\nu), F(\mu_{\Phi_{k}}))< \delta.$$

Choose $N_\epsilon>1$ such that  for all $k>N_{\epsilon}$, $W_{2}(\mu,\mu_{\Phi_{k}}) < \frac{\epsilon}{2}$ and $W_{2}(\mu,\mu_{\Psi_{k}}) < \frac{\epsilon}{2}$. Thus, for $k\geq N_\epsilon$,  $W_{2}(\mu_{\Phi_{k}},\mu_{\Psi_{k}}) < \epsilon$, which implies that for all $k\geq N_{\epsilon}$, $W_{2}(F(\mu_{\Phi_{k}}),F(\mu_{\Psi_{k}})) < \delta$. It easily follows that $\lim_{k\to \infty}F(\mu_{\Phi_{k}})=\lim_{k\to \infty}F(\mu_{\Psi_{k}})$.

\end{proof}
We can now use this proposition to extend the definition of the map $F$ to all probabilistic frames. Let $\mu$ be a probabilistic frame with bounds $0<A\leq B<\infty.$ Let $\{\mu_{\Phi_{k}}\}_{k=1}^{\infty}$ be a sequence of finite probabilistic frames with bounds between $A/2$ and $B+A/2$ such that $lim_{k\to \infty}W_2(\mu_{\Phi_{k}}, \mu)=0$. Then, $$F(\mu)=\lim_{k\to \infty}F(\mu_{\Phi_{k}})$$ is well-defined. Before proving Theorem~\ref{maintheorem} we first identify the minimizer of ~\eqref{optimum} with $F(\mu)$.

\begin{theorem}\label{optimizer1}
Let $\mu$ be a probabilistic frame on $\RR^d$ with probabilistic frame operator $S_\mu$. Then $F(\mu)$ is the unique closest probabilistic Parseval frame to $\mu$ in the $2-$Wasserstein metric, that is $F(\mu)$ is the unique solution to~\eqref{optimum}.
\end{theorem}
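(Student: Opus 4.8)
The plan is to verify three things in turn: that $F(\mu)$ is itself a Parseval probabilistic frame, that it attains the infimum in~\eqref{optimum}, and that it is the only minimizer. For the first point, recall that $F(\mu)=\lim_k F(\mu_{\Phi_k})$ in $W_2$ for finite probabilistic frames $\mu_{\Phi_k}\to\mu$ with bounds in $[A/2,\,B+A/2]$. Each $F(\mu_{\Phi_k})$ is Parseval, so $S_{F(\mu_{\Phi_k})}=I$. Since convergence in $W_2$ forces convergence of the matrices of second moments (the entries $\int x_i x_j\,d\rho$ are integrals of functions of quadratic growth, hence continuous along $W_2$-convergent sequences), we obtain $S_{F(\mu)}=\lim_k S_{F(\mu_{\Phi_k})}=I$, so $F(\mu)$ is a Parseval probabilistic frame.

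Next I would show $F(\mu)$ is a minimizer. Fix an arbitrary Parseval probabilistic frame $\nu$. For each $k$, Proposition~\ref{dists-to-parseval} together with the density of finite Parseval frames (the Remark following Corollary~\ref{appro-prob}) gives that $F(\mu_{\Phi_k})$ is the closest Parseval frame to $\mu_{\Phi_k}$, so $W_2(\mu_{\Phi_k},F(\mu_{\Phi_k}))\le W_2(\mu_{\Phi_k},\nu)$. Letting $k\to\infty$ and using $\mu_{\Phi_k}\to\mu$, $F(\mu_{\Phi_k})\to F(\mu)$, and the triangle inequality for $W_2$ yields $W_2(\mu,F(\mu))\le W_2(\mu,\nu)$. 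The same limiting argument, now applied to the exact value $W_2^2(\mu_{\Phi_k},F(\mu_{\Phi_k}))=\sum_i(\sqrt{\lambda_i^{(k)}}-1)^2$ from Proposition~\ref{dists-to-parseval} together with $S_{\mu_{\Phi_k}}\to S_\mu$, records the minimum value $W_2^2(\mu,F(\mu))=\sum_{i=1}^d(\sqrt{\lambda_i}-1)^2$, where $\lambda_1,\dots,\lambda_d$ are the eigenvalues of $S_\mu$.

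The main obstacle is uniqueness, and the reason is that the set of Parseval frames is \emph{not} geodesically convex in $(\mathcal{P}_2,W_2)$: the Wasserstein interpolant of two Parseval frames is generally not Parseval, so one cannot simply invoke strict convexity of $\nu\mapsto W_2^2(\mu,\nu)$ along generalized geodesics. I would instead get a sharp lower bound with an explicit rigidity in the equality case. For any symmetric $\Lambda\succ 0$ one has the pointwise inequality
\[
\langle x,y\rangle\le \tfrac14\,x^{T}\Lambda^{-1}x+y^{T}\Lambda y ,
\]
with equality exactly when $y=\tfrac12\Lambda^{-1}x$. Integrating this against an arbitrary coupling $\gamma\in\Gamma(\mu,\nu)$ of $\mu$ with a Parseval frame $\nu$, and using $S_\nu=I$, gives $\int\langle x,y\rangle\,d\gamma\le \tfrac14\tr(\Lambda^{-1}S_\mu)+\tr\Lambda$. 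Optimizing over $\Lambda$ (the minimum occurs at $\Lambda=\tfrac12 S_\mu^{1/2}$) produces $\int\langle x,y\rangle\,d\gamma\le \tr S_\mu^{1/2}$ for every coupling. Since $W_2^2(\mu,\nu)=\tr S_\mu+d-2\int\langle x,y\rangle\,d\gamma$, this forces $W_2^2(\mu,\nu)\ge \tr S_\mu+d-2\tr S_\mu^{1/2}=\sum_i(\sqrt{\lambda_i}-1)^2$ for all Parseval $\nu$, matching the value found above.

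Finally I would read off uniqueness from the equality case. If $\nu$ is a minimizer then equality must hold throughout for the optimal $\gamma$ with the choice $\Lambda=\tfrac12 S_\mu^{1/2}$; the rigidity of the pointwise inequality then forces $y=S_\mu^{-1/2}x$ for $\gamma$-a.e.\ $(x,y)$, so $\gamma$ is concentrated on the graph of the linear map $S_\mu^{-1/2}$ and $\nu=(S_\mu^{-1/2})_\#\mu=\mu^{\dag}$. Hence every minimizer equals $\mu^{\dag}$; in particular $F(\mu)=\mu^{\dag}$, and it is the unique solution of~\eqref{optimum} (which simultaneously yields Theorem~\ref{maintheorem}). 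The delicate point, and where I expect the real work, is guessing the correct multiplier $\Lambda=\tfrac12 S_\mu^{1/2}$ so that the elementary inequality is exactly saturated by the canonical linear transport; once that is in place, both the lower bound and the uniqueness follow with no further analysis.
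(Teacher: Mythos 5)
Your proposal is correct, and its core --- the uniqueness argument --- takes a genuinely different route from the paper. The paper proves uniqueness by discretizing both $\mu$ and the competing Parseval frame $\nu$ and running a three-case contradiction argument (finite frames, finite probabilistic frames, general $\mu$) whose engine is the geometric Lemma~\ref{contradiction-unifcont} together with the vector-splitting Lemma~\ref{frame_modi}; it then needs a separate weak-convergence argument (the proof of Theorem~\ref{maintheorem}) to identify $F(\mu)$ with the push-forward $\mu^{\dag}$. You instead get the sharp bound and the rigidity in one stroke: the identity $\tfrac14 x^{T}\Lambda^{-1}x + y^{T}\Lambda y - \ip{x}{y} = \|\tfrac12\Lambda^{-1/2}x-\Lambda^{1/2}y\|^2$ validates your pointwise inequality and its equality case; integrating against any $\gamma\in\Gamma(\mu,\nu)$ with $S_\nu=I$ and taking $\Lambda=\tfrac12 S_\mu^{1/2}$ gives $\int\ip{x}{y}\,d\gamma\le\tr S_\mu^{1/2}$, hence $W_2^2(\mu,\nu)\ge \tr S_\mu + d - 2\tr S_\mu^{1/2}=\sum_i(\sqrt{\lambda_i}-1)^2$ for every Parseval $\nu$ (a Gelbrich-type bound), and equality forces the optimal coupling onto the graph of $S_\mu^{-1/2}$, i.e.\ $\nu=\mu^{\dag}$. (Note you do not even need $\Lambda=\tfrac12 S_\mu^{1/2}$ to be the true optimizer over $\Lambda$; that one choice suffices.) This handles arbitrary non-discrete competitors directly, with no case analysis, and simultaneously proves Theorem~\ref{maintheorem}. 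What you retain from the paper is its approximation machinery (Theorem~\ref{density-dpf}, Proposition~\ref{dists-to-parseval}, Theorem~\ref{continuityFPF}), but only to show that $F(\mu)$, defined by limits, attains the minimum; even that step could be lightened by exhibiting the explicit coupling $(\mathrm{id}\times S_\mu^{-1/2})_{\#}\mu$, which gives $W_2^2(\mu,\mu^{\dag})\le\tr\bigl(S_\mu-2S_\mu^{1/2}+I\bigr)$, so that $\mu^{\dag}$ attains the bound with no discretization at all, leaving the limit argument needed only for the identification $F(\mu)=\mu^{\dag}$ demanded by the statement. The trade-off: the paper's route builds tools of independent interest (uniform continuity of $F$, approximation with controlled frame bounds), while yours is shorter, produces the explicit minimum value, and concentrates all the work in a single matrix inequality with a transparent equality case.
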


\begin{proof} Set $Q=\min  W_{2}(\mu, \nu)$
where $\nu $ ranges over all Parseval probabilistic frames.

Let  $\delta>0$, and  $\mu$ be a probabilistic frame wth frame bounds $A$ and $B$. By Theorem~\ref{density-dpf}, there exists a  sequence of finite probabilistic frame $\mu_{\Phi_{k}}$ with frame bounds between $\frac{A}{2}$ and $B + \frac{A}{2}$ where $\Phi_{k}:=\Phi_{k, w(k)}=\{\varphi_{k}\}_{k=1}^{N_k} \subset \RR^d$, $w(k)=\{w_n\}_{n=1}^{N_k}\subset (0, \infty)$, and $N_k\geq 2$ such that $\lim_{k\to \infty}W_2(\mu, \mu_{\Phi_{k}})=0$.

Observe that for all $k\geq 1$, $$W_{2}(\mu, F(\mu_{\Phi_{k}}))\leq W_2(\mu, F(\mu)) + W_2(F(\mu), F(\mu_{\Phi_{k}})).$$ Choose $\epsilon>0$ as in Theorem~\ref{continuityFPF} and pick $K\geq 1$ such that $W_2(\mu, \mu_{\Phi_{K}})< \epsilon.$ Thus, $W_2(F(\mu), F(\mu_{\Phi_{K}}))< \delta.$ Consequently,

$$W_{2}(\mu, F(\mu_{\Phi_{K}}))\leq W_2(\mu, F(\mu)) + W_2(F(\mu), F(\mu_{\Phi_{K}}))<  W_2(\mu, F(\mu)) + \delta.$$  Since  $F(\mu_{\Phi_{K}})$ is a Parseval frame we conclude that $F(\mu)$ minimizes~\eqref{optimum}.

We now prove that $F(\mu)$ is the unique minimizer of  ~\eqref{optimum} by considering three cases.

\noindent {\bf Case 1.} If $\mu$ is a finite frame $\Phi=\{\varphi_i\}_{i=1}^N\subset \RR^d$, it is known that $S^{-1/2}\Phi$ is the (unique) closest  Parseval frame to $\Phi$, see Theorem~\ref{uniqueness_prob_1}, and \cite[Theorem 3.1]{CasKut07}.

\noindent {\bf Case 2.} If $\mu=\mu_{\Phi, w}$, where $\Phi=\{\varphi_i\}_{i=1}^N\subset \RR^d$, and $w=\{w_i\}_{i=1}^N \subset [0, \infty)$. Then, $\mu_{\Phi^{\dag}, w}$ where $\Phi^{\dag}=S^{-1/2}\Phi$ is the unique closest  Parseval probabilistic frame to $\Phi$. Indeed, we already know that $\mu_{\Phi^{\dag}, w}$ achieves the minimum distance Proposition~\ref{dists-to-parseval}. We now prove that it is unique. We argue by contradiction and assume that there exists  another Parseval probabilistic frame $\nu$ that achieves this distance.

First, we assume  that $\nu=\mu_{\kappa, v}$ where  $\kappa =$ $\{\kappa'_i\}_{i=1}^{M}\subset \RR^d$ with weights $v=\{v_i\}_{i=1}^M\subset [0, \infty)$. Let $\gamma \in \Gamma(\mu, \nu)$ such that $$W_2(\mu, \nu)^2=\iint\|x-y\|^2d\gamma(x,y).$$  For all $i$, $j$ let $w_{i,j} = \gamma(\varphi_i,\kappa'_j)$. Let $Q = \sum_{i=1}^{N}w_i\|\varphi_i - \varphi_i^{\dag}\|^2$, where $\varphi_i^{\dag}=S^{-1/2}\varphi_i$. Since $\kappa$ also achieved this distance we clearly have $Q = \sum_{i,j}w_{i,j}\|\varphi_i - \kappa'_j\|^2$.

We now use Lemma~\ref{frame_modi}. For each $i$, we replace the vector $\varphi_i$ and its weight $w_i$ by $M$ copies of itself (i.e.,  $\varphi_i$ ) each weighted by $w_{i,j}$. Apply the same procedure to  $\Phi^{\dag}$, and to $\kappa$, except that for the latter we break each vector $\kappa_j'$ into $N$ copies of itself with weights $w_{i,j}$.  Denote by $F_1, F_2,$ and $F_3$ the three resulting frames. We note that the vectors in each of these frames can be considered to have weight $1$.

It follows  from Theorem~\ref{uniqueness_prob_1} that the finite frame $F_3=\{\sqrt{w_{i,j}}\kappa'_j\}_{i,j}$ is the (unique) closest Parseval frame to $F_1=\{\sqrt{w_{i,j}}\varphi_i\}_{i,j}$, which we also know is $F_2=\{\sqrt{w_{i,j}}\varphi_i^{\dag}\}_{i,j}$. Therefore,  $\mu_{\kappa, v}=\mu_{\Phi^{\dag}, w}$.

Next, we assume that $\nu$ is not discrete. Choose  a sequence of finite Parseval frames $\{\nu_n\}_{n}^\infty$  such that
$\lim_{n \to \infty}W_2(\nu_n, \nu)=0$.  Hence, $$Q=W_{2}(\mu, F(\mu))=W_2(\mu_{\Phi, w}, \nu)=\lim_{n \to \infty}W_2(\mu_{\Phi, w}, \nu_n).$$ We now prove that $$\lim_{n\to \infty}W_2(\nu_n, \mu_{\Phi^{\dag}, w})=0.$$

Let $\delta > 0$ and choose $N\geq 1$  such that for all $n>N$
$$W_2(\nu_n, \mu_{\Phi, w})< Q + \delta. $$

Suppose by contradiction that $\lim_{n\to \infty}W_2(\nu_n, \mu_{\Phi^{\dag}, w})> 0$. Thus, there is $\epsilon>0$ such for all $k\geq 1$, there exists $n>\max(k, N)$ such that $$W_2(\nu_n, \mu_{\Phi^{\dag}, w})>\epsilon.$$

For  $n$ given above,  let $\gamma_n \in \Gamma(\nu_n, \mu_{\Phi, w})$ be such that $$W_2^2(\nu_n, \mu_{\Phi, w})=\iint_{\RR^d}\|x-y\|^2d\gamma_n(x,y).$$
Since $\nu_n$ is a finite probabilistic frame we may assume further that  $\nu_n=\mu_{u_{n}, v}$ where $u_n= \{\psi_i\}_{i=1}^M\subset \RR^d$ and $v=\{v_i\}_{i=1}^M \subset [0, \infty)$. For the sake of simplicity in notations, we omit the dependence of both $\psi_i$ and $v_i$ on $n$.    Let $w_{n,j,k} = \gamma_{n}(\varphi_j,\psi_k)$.

Now consider the finite frames $\{u'_j\}_{j}=\{\sqrt{w_{n,j,k}}\psi_k\}_{j,k}$ and $\Phi' =$ $\{\sqrt{w_{n,j,k}}\varphi_j\}_{j,k}$.

Note that $W_2(\mu_{\Phi'}, \mu_{\Phi'^{\dag}})=Q$.  Now we consider the rows of these frames written with respect to the eigenbasis of the frame operator $S:=S_{\Phi'}$ of  $\Phi'$.

Because, $W_2(\nu_n, \mu_{\Phi^{\dag}, w})>\epsilon,$ then  $\sum_{j,k}w_{n,j,k}\|\psi_k - S^{-1/2}\varphi_j\|^2 > \epsilon$.

Using this and Lemma~\ref{contradiction-unifcont} we have the following estimates:
$$
W_2^2(\mu_{\Phi, w}, \nu_{n})\geq W_2^2(\mu_{\Phi, w}, \nu)+ \min( \tfrac{\epsilon^2}{d} \cdot M,M^2)$$ where $A$ is the lower frame bound of $\Phi$ and $M = \min(1,\sqrt{A})$.

Consequently,

$$W_2^2(\mu_{\Phi, w}, \nu_{n})-   Q^2\geq  \min( \tfrac{\epsilon^2}{d} \cdot M,M^2)>0 .$$
But,  this contradicts the fact that $Q=W_2(\mu_{\Phi, w}, \nu)=\lim_{n \to \infty}W_2(\nu_{\Phi, w},\nu_n).$  Hence, $\lim_{n\to \infty}W_2(\nu_n, \mu_{S^{-1/2}\Phi, w})= 0$, and $\nu=\mu_{\Phi^{\dag}, w}.$

\noindent {\bf Case 3:} Next, we suppose that $\mu$ is non discrete probabilistic frame with frame bounds $A,$ and $B$. Let $\{\mu_n\}_{n=1}^\infty=\{\mu_{\Phi_{n}, w(n)}\}_{n=1}^\infty$ be a sequence of finite probabilistic frames with bounds between $A/2$ and $B+A/2$ such that $\lim_{n\to \infty}W_2(\mu_n, \mu)=0$. Then $F(\mu)=\lim_{n\to \infty}F(\mu_{n})$ is such that $Q=W_{2}(F(\mu), \mu).$ Suppose there exists another Parseval frame $\nu$  such that $Q=W_2(\nu, \mu)$. Choose a sequence of finite Parseval $\{\nu_n\}_{n=1}^\infty$  such that $\lim_{n\to \infty}\nu_n=\nu.$

Observe that $Q=\lim_{n\to \infty}W_2(\mu_n, F(\mu_n))=\lim_{n\to \infty}W_2(\nu_n, \mu_n)$.
Write  $\Phi_n  = \{\varphi_{n,j}\}_{j=1}^{M}$ and $w(n)=\{w_{j}\}_{j=1}^M$, where for simplicity we omit the dependence of $M$ on $n$. Similarly,  $\{\nu_n\}_{n=1}^{\infty}= \{\psi_{n,j}\}_{j=i}^{M'}$  with weights $v(n)= \{v_{j}\}_{j=1}^{M'}$.

 Let $\gamma_n\in \Gamma(\mu_n, \nu_n)$  be such that $$W_2^2(\mu_{n},\nu_n) =\iint \|x-y\|^2d\gamma_n(x,y).$$  Set $$w_{j,k} = \gamma_n(\varphi_{n,j},\psi_{n,k})$$
We know that
\begin{align*}
W_2^2(\mu_n, F(\mu_n)) &= \sum_{j= 1}^{M}w_j\|\varphi_{n,j} - \varphi^{\dag}_{n,j}\|^2 = \sum_{j,k}w_{j,k}\|\varphi_{n,j} - \varphi^{\dag}_{n,j}\|^2 \\
&=\sum_{j,k}\|\sqrt{w_{j,k}}\varphi_{n,j} - \sqrt{w_{j,k}}\varphi^{\dag}_{n,j}\|^2
\end{align*}

We also know that
$$
W_2^2(\mu_{n},\nu_n)  =  \sum_{j,k}w_{j,k}\|\varphi_{n,j} - \psi_{n,k}\|^2 =\sum_{j,k}\|\sqrt{w_{j,k}}\varphi_{n,j} - \sqrt{w_{j,k}}\psi_{n,k}\|^2 $$

Suppose that $\lim_{n\to \infty} W_2(F(\mu_n), \nu_n)>0$. Thus, there exists $\epsilon>0$ and and integer $n>1$ such that $W_2(F(\mu_n),\nu_n) > \epsilon$. Consequently,
$$\epsilon< \sum_{j,k}w_{j,k}\|\varphi^{\dag}_{n,j} - \psi_{n,k}\|^2 = \sum_{j,k}\|\sqrt{w_{j,k}}\varphi^{\dag}_{n,j} - \sqrt{w_{j,k}}\psi_{n,k}\|^2 $$

Hence  $$d(\Phi'^{\dag }_n, \Psi_n')>\epsilon$$ where $\Psi'_n = \{\sqrt{w_{j,k}}\psi_{n,k}\}$.

By the same argument as in Lemma~\ref{contradiction-unifcont}  we conclude that  $W_2^2(\mu_{n},\nu_n)- W_2^2(\mu_n, F(\mu_n))\geq min(M \frac{\epsilon^2}{d},M^2).$ where $M = \min(1,\sqrt{\frac{A}{2}})$

This contradicts the fact that Since $\lim_{n\to \infty}W_2(\mu_n,\nu_n) = Q = \lim_{n\to \infty}W_2(\mu_n,F(\mu_n))$. Thus $\lim_{n\to \infty} W_2(F(\mu_n), \nu_n)=0$ and so $F(\mu)=\nu$.

\end{proof}

By Proposition~\ref{well-defined-F} it follows that given a probabilistic frame $\mu$ and any sequence $\Phi_{k}:=\Phi_{k, w_{k}}=\{\varphi_{k}\}_{k=1}^{N_k}$ of finite probabilistic frames in $\RR^d$ such that $\lim_{k\to \infty} W_{2}(\mu, \mu_{\Phi_{k}})=0$, then $F(\mu)=\lim_{k\to \infty}F(\mu_{\Phi_{k}}).$ Furthermore, it is proved in \cite{WCKO17} that if $\{\mu_n\}_{n\geq 1} \subset \mathcal{P}_2$ converges in the Wassertein metric to $\mu \in \mathcal{P}_2$, then $$\|S_{\mu}-S_{\mu_{n}}\|\leq CW_2(\mu_{n}, \mu).$$

All that is needed to prove Theorem~\ref{maintheorem} is to show that $F(\mu)=\mu^{\dag}$.

\begin{proof}[Proof of Theorem~\ref{maintheorem}]
Let $\mu$ be a probabilistic frame with bounds $A, B$. Let $0< \epsilon<A/2$ and choose a compactly supported probabilistic frame $\nu_{\epsilon}$ as in Lemma~\ref{prop-2}. In particular $\nu_{\epsilon}$ is supported on $B(0,R_{\epsilon})$ with frame bounds between $A/2$ and $B+A/2$, where $R_{\epsilon}>0$ is such that $$\int_{\RR^d\setminus B(0,R_{\epsilon})}\|x\|^2dx< \epsilon/3.$$  

Choose a finite probabilistic frame $\mu_{\epsilon}$ with bounds between $\frac{A}{2}$ and $B + \frac{A}{2}$ such that $W_{2}(\mu_{\epsilon}, \nu_{\epsilon}) < \frac{\epsilon}{3}$. By taking  a sequence $\{\epsilon_n\}_{n=1}^{\infty}\subset [0, \infty)$  with $\lim_{n\to \infty}\epsilon_n=0$, we can pick $\{\mu_n\}_{n\geq 1}:=\{\mu_{\epsilon_n}\}_{n\geq 1}$ such that  $\lim_{n\to \infty}W_{2}(\mu_{n}, \mu)=0$. Consequently,  $\lim_{n\to \infty}S_{\mu_{n}}=S_{\mu}$, and $\lim_{n\to \infty}S^{-1/2}_{\mu_{n}}=S^{-1/2}_{\mu}$ in the operator norm. 

We recall that $\lim_{n\to \infty}W_{2}(\mu_{n}, \mu)=0$ is equivalent to

$$
\lim_{n\to \infty}\int f\, d\mu_{n}(x) =\int f\, d\mu(x) \\
$$ for all continuous function $f$ such that $|f(x)|\leq C(1+\|x-x_0\|^2)$ for some $x_0\in \RR^d$ \cite[Theorem  6.9]{Villani2009}

We know that $\lim_{n\to \infty} F(\mu_n)=\lim_{n\to \infty} \mu_{n}^{\dag}=F(\mu)$ in the Wasserstein metric. We would like to show that $\lim_{n\to \infty} F(\mu_n)=\lim_{n\to \infty}\mu_{n}^{\dag}=\mu^{\dag}$.

We  show that for all continuous function $f$ such that $|f(x)|\leq C(1+\|x-x_0\|^2)$ for some $x_0\in \RR^d$ $$\lim_{n\to \infty}\int f\, d\mu_{n}^{\dag}(x) =\int f\, d\mu^{\dag}(x).$$

\begin{align*}
|\int f\, d\mu_{n}^{\dag}(x) -\int f\, d\mu^{\dag}(x)|& = |\int f(S^{-1/2}_{\mu_{n}} x)\, d\mu_{n}(x) -\int f(S^{-1/2}_{\mu} x)\, d\mu(x)|\\
&\leq \int |f(S^{-1/2}_{\mu_{n}} x) - f(S^{-1/2}_{\mu} x)|\, d\mu_{n}(x) +\\
&|\int f(S^{-1/2}_{\mu} x)\, d\mu_{n}(x)  -\int f(S^{-1/2}_{\mu} x)\, d\mu(x)|\\
\end{align*}

Let $f$ be continuous with $|f(x)|\leq C(1+\|x-x_0\|^2)$ for some $x_0\in \RR^d$. Then, $f(S^{-1/2}_\mu)$ is continuous and satisfies  $$|f(S^{-1/2}_\mu x)|\leq C(1+\|x_0-S^{-1/2}_\mu x\|^2)\leq C(1+\|S^{-1/2}_\mu\|^2\|x-S^{1/2}_{\mu}x_0\|^2)\leq C' (1+\|x-S^{1/2}_{\mu}x_0\|^2)).$$ Consequently, we can find $N_1$ such that for all $n\geq N_1$, $$|\int f(S^{-1/2}_{\mu} x)\, d\mu_{n}(x)  -\int f(S^{-1/2}_{\mu} x)\, d\mu(x)|< \epsilon/3.$$

Since $f$ is continuous, there exists  $\delta>0$ such that for all $x, y \in B(0,R')$, $\|x-y\|< \delta $ implies that $|f(x)-f(y)|< \epsilon/3$, where $R'>0$ is chosen  so as to guarantee that for large $n$, and $x\in B(0,R)$, $S^{-1/2}_{\mu_{n}}x, S^{-1/2}_{\mu}x \in B(0, R)$. Since,  $\lim_{n\to \infty}S^{-1/2}_{\mu_{n}}=S^{-1/2}_{\mu}$ , there exists $N_2$ such that for all $n\geq N_2$,
$$\|S^{-1/2}_{\mu_{n}}x-S^{-1/2}_{\mu}x\|\leq \|S^{-1/2}_{\mu_{n}}-S^{-1/2}_{\mu}\| \|x\|\leq R \|S^{-1/2}_{\mu_{n}}-S^{-1/2}_{\mu}\|< \delta.$$

Therefore, for $n\geq N_2$, $ |f(S^{-1/2}_{\mu_{n}} x) - f(S^{-1/2}_{\mu} x)|< \epsilon/3$ for all $x\in B(0, R)$. Consequently,
\begin{align*}
\int |f(S^{-1/2}_{\mu_{n}} x) - f(S^{-1/2}_{\mu} x)|\, d\mu_{n}(x) &= \int_{B(0,R)} |f(S^{-1/2}_{\mu_{n}} x) - f(S^{-1/2}_{\mu} x)|\, d\mu_{n}(x)\\&+\int_{\RR^d\setminus B(0,R)} |f(S^{-1/2}_{\mu_{n}} x) - f(S^{-1/2}_{\mu} x)|\, d\mu_{n}(x) \\
&<  \epsilon/3+ \int_{\RR^d\setminus B(0,R)} |f(S^{-1/2}_{\mu_{n}} x) - f(S^{-1/2}_{\mu} x)|\, d\mu_{n}(x)\\
&< \epsilon/3 + M \int_{\RR^d\setminus B(0,R)} \|x\|^2\, d\mu_{n}(x)\\
&<2\epsilon/3
\end{align*}
where $M>0$ is a constant that depends only on $f$, and $\mu$.

It follows that for all $n\geq \max(N_1, N_2),$  we have
$$|\int f\, d\mu_{n}^{\dag}(x) -\int f\, d\mu^{\dag}(x)|< \epsilon$$ which implies that $\lim_{n\to \infty}\int f\, d\mu_{n}^{\dag}(x) =\int f\, d\mu^{\dag}(x).$

\end{proof}

\section*{Acknowledgment}
Both authors were partially supported by ARO grant W911NF1610008. K.~A.~Okoudjou  was also partially supported by a grant from the Simons Foundation $\# 319197$. This material is based upon work supported by the National Science Foundation under Grant No.~DMS-1440140 while K.~A.~Okoudjou was in residence at the Mathematical Sciences Research Institute in Berkeley, California, during the Spring 2017 semester.

\bibliographystyle{amsplain}
\bibliography{PFW2_bib}

\end{document}